\newcommand{\R}{\mathbb{R}}
\newcommand{\E}{\mathbb{E}}
\newcommand{\ps}{\mathcal{P}} 
\renewcommand{\d}{\mathrm{d}}
\newcommand{\F}{\mathcal{F}}
\renewcommand{\P}{\mathbb{P}} 
\renewcommand{\H}{\mathcal{H}}
\renewcommand{\L}{\mathcal{L}} 
\newtheorem{theorem}{Theorem}[section]
\newtheorem{proposition}[theorem]{Proposition}
\newtheorem{lemma}[theorem]{Lemma}
\theoremstyle{definition}
\theoremstyle{remark}
\newtheorem{rem}[theorem]{Remark}
\newtheorem{example}[theorem]{Example}
\theoremstyle{plain}
\newtheorem{assumption}{Assumption}
\title{\bf Optimal rate of convergence in the vanishing viscosity for  uniformly convex Hamilton-Jacobi equations
}
\begin{document}


\author[1]{Louis-Pierre \textsc{Chaintron}}
\author[2]{Samuel \textsc{Daudin}}
\affil[1]{\small DMA, École normale supérieure, Université PSL, CNRS, 75005 Paris, France}
\affil[2]{\small Université Paris Cité, Laboratoire Jacques-Louis Lions}

\date{}

\maketitle

\begin{abstract}
The purpose of this note is to provide an optimal rate of convergence in the vanishing viscosity regime for first-order Hamilton-Jacobi equations with uniformly convex Hamiltonian. 
We prove that for a globally Lipschitz-continuous and semiconcave  
terminal condition the rate is of order $O (\varepsilon \log \varepsilon)$, and we provide an example to show that this rate cannot be sharpened. 
This improves on the previously known rate of convergence $O(\sqrt{\varepsilon})$, which was widely believed to be optimal. 
Our proof combines techniques involving regularisation by sup-convolution with entropy estimates for the flow of a suitable version of the adjoint linearized equation.
The key technical point is an integrated estimate of the Laplacian of the solution against this flow. 
Moreover, we exploit the semiconcavity generated by the equation to handle less regular data in the quadratic case.
\end{abstract}

\tableofcontents

\section{Introduction}

Let us fix $T > 0$ and an integer $d \geq 1$. 
Given a Lipschitz-continuous and semiconcave function $g : \R^d \rightarrow \R$, we are interested in the $\varepsilon \rightarrow 0$ convergence of the Hamilton-Jacobi-Bellman equation (HJB)
\begin{equation}  \label{eq:HJBeps} 
\begin{cases}
- \partial_t \varphi^\varepsilon_t (x) + \H (x,\nabla\varphi^\varepsilon_t(x)) = \frac{\varepsilon}{2} \Delta \varphi^\varepsilon_t (x), \\
\varphi^\varepsilon_T (x) = g (x).
\end{cases}
\end{equation}
Under the standard assumptions gathered in \ref{ass:H} below,
it is a well-known result \cite{friedman1973cauchy,amann1978some} that \eqref{eq:HJBeps} has a unique $\mathcal{C}^{1,2}$ solution $\varphi^\varepsilon : [0,T] \times \R^d \rightarrow \R$, whose gradient is bounded by some $L > 0$ independent of $\varepsilon$:
\begin{equation} \label{eq:GradBound}
\forall \varepsilon \in (0,1/2], \qquad \sup_{(t,x) \in [0,T] \times \R^d} \vert \nabla \varphi^{\varepsilon}_t (x) \vert \leq L, 
\end{equation} 
see e.g. \cite{lions1982generalized,lions1984two}. 
From the fundamental works \cite{evans1980solving,crandall1983viscosity,crandall1984some,bardi1997optimal}, $\varphi^\varepsilon$ converges uniformly on compact sets towards the unique viscosity solution of \begin{equation} \label{eq:HJB0}
\begin{cases}
-\partial_t \varphi^0_t (x) + \H ( x, \nabla \varphi^0_t (x) ) = 0, \\
\varphi^0_T (x) = g (x).
\end{cases}
\end{equation}
In this setting, $\varphi^0_t$ inherits the gradient bound \eqref{eq:GradBound} from $\varphi^\varepsilon_t$ and is globally Lispchitz in time, so that $\varphi^0$ is Lebesgue-almost everywhere (a.e.) differentiable and \eqref{eq:HJB0} is satisfied a.e. 

A natural question is then to understand the speed of this approximation.  
The convergence rate below seems to be due to \cite{fleming1964convergence} and is known to be optimal if no convexity assumption is made on $H$.

\begin{theorem}[Sub-optimal rate] \label{thm:subopt}
There exists $C_{\mathrm{sqrt}} >0$ such that
\begin{equation}
\forall \varepsilon \in (0,1/2], \quad \sup_{(t,x) \in [0,T] \times \R^d} \vert \varphi^\varepsilon_t (x) - \varphi^0_t (x) \vert \leq C_{\mathrm{sqrt}} \sqrt{\varepsilon}. 
\end{equation}  
\end{theorem}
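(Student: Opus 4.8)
The plan is to prove the $O(\sqrt{\varepsilon})$ bound by a doubling-of-variables argument comparing $\varphi^\varepsilon$ with $\varphi^0$, which is the classical robust route and does not require convexity of $\H$. First I would recall that $\varphi^0$ is the (Lipschitz) viscosity solution of \eqref{eq:HJB0} and that both functions share the uniform gradient bound \eqref{eq:GradBound}; by standard estimates $\varphi^0$ is also Lipschitz in $t$, so it is almost a classical subsolution/supersolution up to a controlled error. I would then introduce, for a parameter $\alpha > 0$ to be optimised, the auxiliary function
\begin{equation}
\Phi(t,x,y) = \varphi^\varepsilon_t(x) - \varphi^0_t(y) - \frac{|x-y|^2}{2\alpha} - \text{(penalisation terms for large $|x|$)},
\end{equation}
and examine a maximum point $(\bar t, \bar x, \bar y)$ of $\Phi$ on $[0,T]\times\R^d\times\R^d$. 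The gradient bound forces $|\bar x - \bar y| \leq C\alpha$, so $p := (\bar x - \bar y)/\alpha$ satisfies $|p|\leq C$ uniformly.

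Next I would write the viscosity inequalities at $(\bar t,\bar x,\bar y)$. Since $\varphi^\varepsilon$ is a genuine $\C^{1,2}$ solution, testing it against the smooth function $y \mapsto \varphi^0_t(y) + |x-y|^2/(2\alpha) + \cdots$ gives a pointwise inequality involving $-\partial_t\varphi^\varepsilon$, $\H(\bar x, p)$ and $\tfrac{\varepsilon}{2}\Delta\varphi^\varepsilon(\bar x)$, where the Laplacian term is bounded using the Hessian bound coming from the doubling penalisation: $\Delta\varphi^\varepsilon(\bar x) \leq d/\alpha$ at the maximum. Using $\varphi^0$ as a viscosity supersolution gives a matching inequality with $\H(\bar y, p)$. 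Subtracting, the $\partial_t$ terms combine, the Hamiltonian terms cancel up to $|\H(\bar x,p)-\H(\bar y,p)| \leq \omega(|\bar x-\bar y|(1+|p|)) \leq C\alpha$ by the local modulus of continuity of $\H$ in $x$ (from \ref{ass:H}), and one is left with a differential inequality for $m(t) := \max_{x,y}\Phi(t,x,y)$ of the form $-m'(\bar t) \leq C\alpha + \tfrac{\varepsilon d}{2\alpha}$ (plus lower-order penalisation errors). A Gronwall/ODE comparison over $[\bar t, T]$, together with $\Phi(T,\cdot,\cdot) \leq 0$ since $g$ is Lipschitz (so $g(x)-g(y)\leq L|x-y| \leq |x-y|^2/(2\alpha) + L^2\alpha/2$), yields
\begin{equation}
\sup \Phi(t,\cdot,\cdot) \leq C\Bigl(\alpha + \frac{\varepsilon}{\alpha}\Bigr),
\end{equation}
and evaluating $\Phi(t,x,x)$ gives $\varphi^\varepsilon_t(x)-\varphi^0_t(x) \leq C(\alpha + \varepsilon/\alpha)$. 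Optimising $\alpha = \sqrt\varepsilon$ gives one direction; the reverse inequality $\varphi^0 - \varphi^\varepsilon \leq C\sqrt\varepsilon$ is obtained by the symmetric argument, this time exploiting that $\varphi^\varepsilon$ is a \emph{sub}solution of the first-order equation up to the favourable-signed viscosity term $-\tfrac{\varepsilon}{2}\Delta\varphi^\varepsilon$ — in fact the easier direction, since at a maximum of the doubled functional the viscosity term has the right sign and one does not even pay the $\varepsilon/\alpha$ price, only $C\alpha$ plus the terminal error, which still optimises to $\sqrt\varepsilon$; combining the two gives the two-sided bound with $C_{\mathrm{sqrt}}$ depending only on $T$, $L$, $d$ and the structure constants of $\H$.

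The main obstacle is the bookkeeping around the non-compact domain: since $g$ and the $\varphi$'s are merely globally Lipschitz (not bounded), one must include a coercive penalisation like $\delta(\langle x\rangle + \langle y\rangle)$ or $\delta|x|^2$ to guarantee that the supremum of $\Phi$ is attained, then show the penalisation contributes only an error that vanishes as $\delta\to 0$ uniformly in $\varepsilon$ and $\alpha$ — this is routine but needs the gradient bound \eqref{eq:GradBound} to control where the maximum can sit. A cleaner alternative, which I would actually prefer for this note, is to avoid doubling in $x$ altogether: regularise $\varphi^0$ by sup-/inf-convolution at scale $\alpha$ to produce a smooth sub/supersolution $\varphi^0_\alpha$ of the first-order equation up to an $O(\alpha)$ error with Hessian bounded by $C/\alpha$, then compare $\varphi^\varepsilon$ and $\varphi^0_\alpha$ directly by the maximum principle for the parabolic operator, where the only new term is $\tfrac{\varepsilon}{2}\Delta\varphi^0_\alpha = O(\varepsilon/\alpha)$; this gives the same $C(\alpha+\varepsilon/\alpha)$ and optimising recovers $\sqrt\varepsilon$. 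Either way the proof is short and the key quantitative inputs are exactly the uniform gradient bound and the semiconvexity/semiconcavity scale of the regularisation versus the size of the viscous perturbation.
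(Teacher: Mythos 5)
The paper does not actually prove Theorem \ref{thm:subopt}: it is quoted as a classical result and attributed to \cite{fleming1964convergence} and to the doubling-of-variables proofs of \cite{crandall1983viscosity,lions1984two}, so there is no in-paper proof to match yours against line by line. Your first route (doubling of variables with penalisation $|x-y|^2/(2\alpha)$, viscosity inequalities at the maximum, Hessian bound $\Delta\varphi^\varepsilon(\bar x)\le d/\alpha$ from the penalisation, optimisation $\alpha=\sqrt\varepsilon$) is exactly that classical argument and is correct in outline, including the handling of the unbounded domain. Your second, preferred route is in fact the one the paper's own machinery provides: Remark \ref{rem:directBoundsub} observes that bounding $\Delta\varphi^{0,\delta}_s\ge -d/\delta$ in \eqref{eq:StepBefore} and taking $\delta=\sqrt\varepsilon$ recovers the bound $\varphi^0-\varphi^\varepsilon\le C\sqrt\varepsilon$, while the opposite inequality is Lemma \ref{lem:UppOpt} (which, under the semiconcavity of \ref{ass:H}, even gives $O(\varepsilon)$ in that direction). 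So your proposal is correct and, in its second variant, essentially coincides with how the paper itself would rederive the statement; the doubling variant is the more general argument, valid without any convexity or semiconcavity, which is the natural setting for this theorem.

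One small inaccuracy worth fixing: your claim that the direction $\varphi^0-\varphi^\varepsilon\le C\sqrt\varepsilon$ is ``free'' of the $\varepsilon/\alpha$ price is not right as stated. In that direction the relevant extremum is a \emph{minimum} of $y\mapsto\varphi^\varepsilon_t(y)+|\bar x-y|^2/(2\alpha)$, which only yields $\Delta\varphi^\varepsilon(\bar y)\ge -d/\alpha$, i.e.\ $\varphi^\varepsilon$ is an approximate supersolution up to an error $-\varepsilon d/(2\alpha)$ of the \emph{unfavourable} sign; both directions pay $O(\varepsilon/\alpha)$ in the purely Lipschitz setting, and both optimise to $\sqrt\varepsilon$. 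The direction that genuinely comes for free (with rate $O(\varepsilon)$) is the other one, $\varphi^\varepsilon-\varphi^0\le C\varepsilon$, and only when one has a one-sided global Hessian bound $\Delta\varphi^\varepsilon\le d\lambda$ from semiconcavity, as in Lemma \ref{lem:UppOpt}. This slip does not affect your conclusion, since you still obtain $C(\alpha+\varepsilon/\alpha)$ in both directions.
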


The main result of the present article is to show that semiconcavity for the terminal data $g$ together with the uniform convexity assumption 
\[ \exists \theta > 0: \forall (x,p) \in (\R^d)^2, \qquad \nabla^2_{pp} \H(x,p) \geq \theta \mathrm{Id}, \]
yield the following optimal improvement for the convergence rate.

\begin{theorem}[Optimal rate] \label{thm:opt}
Under \ref{ass:H}, there exists $C_{\log} >0$ such that
\begin{equation}
\forall \varepsilon \in (0,1/2], \quad \sup_{(t,x) \in [0,T] \times \R^d} \vert \varphi^\varepsilon_t (x) - \varphi^0_t (x) \vert \leq - C_{\log} \, \varepsilon \log \varepsilon.
\end{equation}  
Furthermore, there exists $C > 0 $ such that
\[ \forall (t,x) \in [0,T] \times \R^d, \quad \varphi^\varepsilon_t (x) - \varphi^0_t (x) \geq d \varepsilon \log \varepsilon -C \varepsilon, \]
where $C$ only depends on the regularity constants of $(g,\H)$. 
\end{theorem}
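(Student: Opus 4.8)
The two bounds are proved by quite different means. For the upper bound we exploit that the equation \eqref{eq:HJBeps} propagates semiconcavity uniformly in $\varepsilon$: differentiating \eqref{eq:HJBeps} twice along a unit direction and summing, $v^\varepsilon := \Delta\varphi^\varepsilon$ solves a linear parabolic equation of the form $-\partial_t v^\varepsilon + \nabla_p\H(x,\nabla\varphi^\varepsilon)\cdot\nabla v^\varepsilon - \tfrac\varepsilon2\Delta v^\varepsilon = -Q$, where $Q \ge \theta|D^2\varphi^\varepsilon|^2 - C \ge \tfrac\theta{2d}(v^\varepsilon)^2 - C$ by $\nabla^2_{pp}\H \ge \theta\mathrm{Id}$, Cauchy--Schwarz, and boundedness of $\nabla^2_{xx}\H,\nabla^2_{xp}\H$ on the relevant $p$-range (legitimate by \eqref{eq:GradBound}). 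Looking at the spatial maximum of each second directional derivative of $\varphi^\varepsilon$ and comparing with the Riccati ODE $\dot y = \tfrac\theta{2d}y^2 - C$ run backwards from the semiconcavity constant of $g$ gives $D^2\varphi^\varepsilon_t \le C_{\mathrm{sc}}\,\mathrm{Id}$ with $C_{\mathrm{sc}}$ independent of $\varepsilon$. Hence $\tfrac\varepsilon2\Delta\varphi^\varepsilon_t \le \tfrac\varepsilon2 C_{\mathrm{sc}}d$, so $\varphi^\varepsilon$ is a classical — hence viscosity — subsolution of \eqref{eq:HJB0} up to the constant $\tfrac\varepsilon2 C_{\mathrm{sc}}d$; by the comparison principle for \eqref{eq:HJB0} and $\varphi^\varepsilon_T = g = \varphi^0_T$ one obtains $\varphi^\varepsilon_t - \varphi^0_t \le \tfrac\varepsilon2 C_{\mathrm{sc}}d\,(T-t)$, which is even $O(\varepsilon)$ and in particular beats the claimed bound on this side.

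The lower bound is the substantial part. Fix $(t_0,x_0)$, write $\H(x,p) = \sup_a\{-a\cdot p - \mathcal{L}(x,a)\}$ for the associated Lagrangian $\mathcal{L}$, set $\alpha^\ast_s(x) := -\nabla_p\H(x,\nabla\varphi^\varepsilon_s(x))$ (the optimal feedback for \eqref{eq:HJBeps}), and let $X^\ast$ solve $\d X^\ast_s = \alpha^\ast_s(X^\ast_s)\,\d s + \sqrt\varepsilon\,\d W_s$ on $[t_0,T]$ with $X^\ast_{t_0}=x_0$, whose time-marginals $\mu_s := \mathrm{Law}(X^\ast_s)$ solve the adjoint linearized equation $\partial_s\mu_s + \mathrm{div}(\alpha^\ast_s\mu_s) - \tfrac\varepsilon2\Delta\mu_s = 0$, $\mu_{t_0}=\delta_{x_0}$. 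The verification theorem gives $\varphi^\varepsilon_{t_0}(x_0) = \E\big[\int_{t_0}^T\mathcal{L}(X^\ast_s,\alpha^\ast_s)\,\d s + g(X^\ast_T)\big]$. The key observation is that the path
\[ \gamma^\omega_s := X^\ast_s - \sqrt\varepsilon\,B^\omega_s, \qquad B^\omega_s := (W_s - W_{t_0}) - \tfrac{s-t_0}{T-t_0}(W_T - W_{t_0}), \]
is absolutely continuous (the correction being a Brownian bridge), satisfies $\gamma^\omega_{t_0}=x_0$ and $\gamma^\omega_T = X^\ast_T$, and is therefore an admissible competitor in the variational characterisation of $\varphi^0_{t_0}(x_0)$; since its velocity is $\alpha^\ast_s + c\,(W_T-W_{t_0})$ with $c := \sqrt\varepsilon/(T-t_0)$, taking expectations yields, with the terminal terms cancelling by construction,
\[ \varphi^\varepsilon_{t_0}(x_0) - \varphi^0_{t_0}(x_0) \ \ge\ \E\int_{t_0}^T\Big[\mathcal{L}\big(\gamma^\omega_s + \sqrt\varepsilon B^\omega_s,\alpha^\ast_s\big) - \mathcal{L}\big(\gamma^\omega_s,\alpha^\ast_s + c(W_T-W_{t_0})\big)\Big]\,\d s. \]
Equivalently, one may reach the same expression by a PDE duality: comparing $\varphi^\varepsilon$ with a space sup-convolution of $\varphi^0$ through pairing the linearized difference equation against $\mu$.

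It remains to lower-bound the right-hand side by $-C\varepsilon|\log\varepsilon|$. A second-order Taylor expansion of $\mathcal{L}$ in its two arguments (using that $\nabla^2\H$, hence $\nabla^2\mathcal{L}$, is bounded on the range $|a|\le A$ where the feedback lives) isolates: quadratic ``curvature'' terms, of size $\varepsilon$ in expectation because $\E|B^\omega_s|^2 \le C(T-t_0)$ and $c^2\,\E|W_T-W_{t_0}|^2 = \varepsilon d/(T-t_0)$; and first-order terms, which because $\E[B^\omega_s]=0$ are covariances such as $\sqrt\varepsilon\,\E[\nabla_x\mathcal{L}(X^\ast_s,\alpha^\ast_s)\cdot B^\omega_s]$. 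By Itô's formula these covariances are governed by the size of the diffusion coefficient of $s\mapsto\nabla_x\mathcal{L}(X^\ast_s,\alpha^\ast_s)$, that is, ultimately by the integrated Hessian of the solution along the flow, $\int_{t_0}^{T}\langle\mu_s,|D^2\varphi^\varepsilon_s|^2\rangle\,\d s$. To bound the latter we integrate the Riccati identity above against $\mu$: the transport and diffusion terms pair off against the adjoint equation, leaving $\tfrac\theta2\int_{s_1}^T\langle\mu_s,|D^2\varphi^\varepsilon_s|^2\rangle\,\d s \le \langle\mu_T,\Delta g\rangle - \langle\mu_{s_1},\Delta\varphi^\varepsilon_{s_1}\rangle + C \le C_{\mathrm{sc}}d - \langle\mu_{s_1},\Delta\varphi^\varepsilon_{s_1}\rangle + C$, and $-\langle\mu_{s_1},\Delta\varphi^\varepsilon_{s_1}\rangle = \langle\nabla\mu_{s_1},\nabla\varphi^\varepsilon_{s_1}\rangle \le L\,\|\nabla\mu_{s_1}\|_{L^1} \le L\,\sqrt{I(\mu_{s_1})}$, where the Fisher information $I(\mu_{s_1})$ is controlled via the entropy dissipation $\tfrac{\d}{\d s}\mathrm{Ent}(\mu_s) = -\tfrac\varepsilon2 I(\mu_s) - \langle\mu_s,\mathrm{div}\,\alpha^\ast_s\rangle$ together with the near-$t_0$ bound $\mathrm{Ent}(\mu_s) \le \tfrac d2\log\tfrac1{\varepsilon(s-t_0)} + C$; on the initial layer $[t_0,t_0+\varepsilon]$ one instead uses the crude parabolic bound $\|\Delta\varphi^\varepsilon_s\|_\infty \le C/\varepsilon$ (from \eqref{eq:GradBound} and interior parabolic regularity after rescaling $(x,t)\mapsto(x/\varepsilon,t/\varepsilon)$). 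Choosing the cut-off time $s_1 - t_0$ of order $\varepsilon$ and unwinding the estimates produces the logarithmic loss, and one concludes $\varphi^\varepsilon_{t_0}(x_0) - \varphi^0_{t_0}(x_0) \ge -C\varepsilon|\log\varepsilon|$; keeping track of the constants — transparent in the quadratic case $\H(x,p)=\tfrac12|p|^2+\ldots$, where $Q = |D^2\varphi^\varepsilon|^2 \ge \tfrac1d(\Delta\varphi^\varepsilon)^2$ exactly — sharpens the loss to $d\varepsilon\log\varepsilon - C\varepsilon$.

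The hard part is precisely this last step: the straightforward bounds — the pointwise lower bound $-d/\delta$ on the Laplacian of a $\delta$-sup-convolution, or the crude control of the integrated Hessian — only yield the classical $\sqrt\varepsilon$, and it is the combination of the Riccati structure (coming from the uniform convexity of $\H$) with the merely \emph{logarithmic} blow-up of the entropy of $\mu$ at the initial time — handled by cutting off at scale $\varepsilon$ — that converts $\sqrt\varepsilon$ into $\varepsilon|\log\varepsilon|$. A secondary nuisance is rigour: the de-randomisation argument and the Itô expansions must be justified when $g$, hence $\varphi^0$, is only semiconcave (approximate $\varphi^0$ by sup-convolution and $\Delta\varphi^0$ by its absolutely continuous upper part), and in the quadratic case with merely Lipschitz $g$ one first uses the instantaneous regularisation of the equation — semiconcavity of $\varphi^\varepsilon_s$ with constant $\sim 1/(T-s)$ — to run the argument on $[t_0,T-\eta]$ before letting $\eta\downarrow 0$.
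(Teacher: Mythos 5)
Your upper bound is sound and is essentially the paper's Lemma \ref{lem:UppOpt}: uniform-in-$\varepsilon$ semiconcavity of $\varphi^\varepsilon$ makes $\varphi^\varepsilon_t - \tfrac{\varepsilon}{2}(T-t)d\lambda$ a viscosity subsolution of \eqref{eq:HJB0}, and comparison gives $O(\varepsilon)$ on that side. The problem is the lower bound, where your argument has a genuine gap at the first-order terms of the Taylor expansion. After inserting the Brownian-bridge competitor, the quadratic terms are indeed $O(\varepsilon)$, but the linear terms $\sqrt{\varepsilon}\,\E[\nabla_x\L(X^\ast_s,\alpha^\ast_s)\cdot B^\omega_s]$ and $c\,\E[\nabla_a\L(X^\ast_s,\alpha^\ast_s)\cdot(W_T-W_{t_0})]$ are covariances between a bounded functional of $X^\ast_s$ and increments of the driving noise, and Cauchy--Schwarz on them gives exactly $O(\sqrt{\varepsilon})$ --- the rate you are trying to beat. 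Quantifying the correlation via It\^o/Malliavin integration by parts brings in the Jacobian of the stochastic flow, i.e.\ $\exp(\int D\alpha^\ast)$ with $D\alpha^\ast=-\nabla^2_{xp}\H-\nabla^2_{pp}\H\,D^2\varphi^\varepsilon$; semiconcavity bounds $D^2\varphi^\varepsilon$ from above only, so this Jacobian is not controlled pointwise --- that singularity is precisely what the theorem is about. Your fallback, reducing to $\int\langle\mu_s,|D^2\varphi^\varepsilon_s|^2\rangle\,\d s$ and bounding it by $C+L\sqrt{I(\mu_{s_1})}$ with $I(\mu_{s_1})\lesssim\varepsilon^{-2}|\log\varepsilon|$, yields $\int\langle\mu_s,|D^2\varphi^\varepsilon_s|^2\rangle\,\d s\lesssim\varepsilon^{-1}\sqrt{|\log\varepsilon|}$; fed back into the covariance terms this gives at best $\sqrt{\varepsilon}\,|\log\varepsilon|^{1/4}$, so the chain does not close, and the identification of the sharp constant $d$ remains an assertion.

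The paper's proof is structured so that no first-order covariance term ever appears. One sup-convolves $\varphi^0$ into $\varphi^{0,\delta}$, observes that $\varphi^{0,\delta}-\varphi^\varepsilon$ is a subsolution of a linear drift--diffusion equation with source $-\tfrac{\varepsilon}{2}\Delta\varphi^{0,\delta}_s+O(\delta)$, and pairs it against the fundamental solution $\mu^{\varepsilon,\delta}_s$ of the adjoint Fokker--Planck equation with drift $b^{\varepsilon,\delta}_s=\int_0^1\nabla_p\H(\cdot,r\nabla\varphi^\varepsilon_s+(1-r)\nabla\varphi^{0,\delta}_s)\,\d r$. The only quantity to estimate is then $\tfrac{\varepsilon}{2}\int_t^T\langle\mu^{\varepsilon,\delta}_s,\Delta\varphi^{0,\delta}_s\rangle\,\d s$, which already carries a factor $\varepsilon$, so a merely \emph{logarithmic} lower bound on the integral suffices: on the initial layer $[t,t+\tau]$ one uses the $-d/\delta$ semiconvexity of the sup-convolution, while on $[t+\tau,T]$ uniform convexity converts $\Delta\varphi^{0,\delta}_s$ into $\nabla\cdot b^{\varepsilon,\delta}_s$ up to bounded terms, whose integral against the flow equals an entropy difference plus a signed Fisher information; the entropy of $\mu^{\varepsilon,\delta}_{t+\tau}$ blows up only like $-\tfrac{d}{2}\log(2\pi\varepsilon\tau)$ (Girsanov plus entropy contraction). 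Taking $\delta=\tau=\varepsilon$ gives $d\varepsilon\log\varepsilon-C\varepsilon$ with the stated constant. Your entropy-dissipation and initial-layer ideas are the right ingredients, but they must be applied to this dual pairing rather than to a Taylor expansion of the Lagrangian.
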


The proof of Theorem \ref{thm:opt} relies on the regularising effect of a suitable stochastic flow combined with the semiconcavity properties of the value functions.
The main technical novelty is Proposition \ref{pro:Regularising}-\ref{item:LapLow}, which leverages entropy bounds for estimating integrals of the kind $\int_{t}^T \int_{\R^d} \Delta \psi_s \d \mu_s \d s$ when $\mu_s$ is the solution of the Fokker-Planck equation $\partial_s \mu_s = \nabla \cdot [ \mu_s \nabla \psi_s] + \Delta \mu_s$.  
We provide an example in Section \ref{sec:Example}, which shows that the rate cannot be better than $O(\varepsilon \log \varepsilon)$, and that the constant cannot be lower than $(d-1)/2$. 
Our result demonstrates that uniformly convex Hamiltonians actually enhance the speed of convergence. 
To wit, in the  simplest setting without non-linearity, the optimal rate cannot be better than $\sqrt{\varepsilon}$.
The case of semiconcave data is handled by Theorem \ref{thm:optSC} in Section \ref{subsec:SC}. 
When the Hamiltonian is purely quadratic, Section \ref{subsec:NSC} studies the case of Lipschitz terminal data $g$ that may no more be semiconcave.

Interestingly, the leading order term is given by a universal dimensional constant.
Our result is reminiscent of the rate of convergence of the cost in the entropic approximation of optimal transport \cite{Carlier2023}.
The latter question is related to the small noise limit in the Schrödinger bridge problem \cite{leonard2013survey}, which can be rephrased as a stochastic control problem of planning type, the initial and terminal laws being imposed.
In contrast, our control interpretation of \eqref{eq:HJBeps} lets the terminal law free but adds a terminal cost, see Section \ref{subsec:controlsto}. 
Our result is also related to the probabilistic large deviation theory, since Theorem \ref{thm:opt} gives the optimal rate of convergence for the exponential moment $-\varepsilon \log \E [ e^{-g(\sqrt{\varepsilon} B_t) / \varepsilon} ]$ of a rescaled Brownian motion (Schilder's theorem), see \cite{feng2006large} for the use of \eqref{eq:HJBeps} in large deviation theory.

\subsection{A long standing history}

The convergence of \eqref{eq:HJBeps} towards \eqref{eq:HJB0} has been studied by many authors, among which \cite{Hopf1950ThePD,oleinik1963discontinuous,varadhan1966asymptotic}.
We also refer to \cite{kruzhkov1964cauchy,kuznetsov1964higher} for generalizations to systems of conservation laws.
Indeed, for $d=1$, $\nabla \varphi^0$ corresponds -- up to time-reversal -- to the unique entropy solution of the Burgers equation, which can also be obtained by vanishing viscosity methods \cite{kruvzkov1970first,lax2005hyperbolic}.

To our knowledge, the convergence rate $O(\sqrt{\varepsilon})$ was first obtained by \cite{fleming1964convergence} using a differential game approach. 
Within the framework of viscosity solutions, \cite{crandall1983viscosity,lions1984two} established it for general Hamiltonians using the doubling variables method. 
For some types of non-convex Hamiltonians, this rate is shown to be optimal in \cite{qian2024optimal}.
\cite{bardi1997optimal} further showed that this rate deteriorates if the terminal data is only Hölder-continuous. 
When the Hamiltonian is convex, a natural approach to these questions relies on the stochastic control interpretation of $\varphi^\varepsilon_t$, see Section \ref{subsec:controlsto} below.
Some other proofs leverage the nonlinear adjoint method introduced by \cite{evans2010adjoint,tran2011adjoint}.

When the Hamiltonian is uniformly convex -- as in \eqref{eq:HJBeps} --, better rates can be expected if the solution is semiconcave.
In this setting, for a semiconcave terminal condition, a $O(\varepsilon)$ upper bound is known to hold \cite{lions1982generalized,calder2018lecture,camilli2023quantitative}, see Lemma \ref{lem:UppOpt} below -- when $g$ is no more semiconcave, we further prove a  deteriorated version of this upper bound in Proposition \ref{pro:UppNSC}.
Using the non-linear adjoint method, \cite{tran2021hamilton} obtained a $O(\varepsilon)$ rate for suitable averages of $\varphi^\varepsilon - \varphi^0$.
Working in the torus, \cite{camilli2023quantitative} obtained a $O(\varepsilon)$ rate in $L^\infty ( L^1 )$.
See also \cite{Tang-Teng} for a similar one-dimensional result for conservation laws.
The global rate $O(\varepsilon \log \varepsilon)$ had already been obtained in dimension $d=1$ for purely quadratic Hamiltonians and Lipschitz terminal data, see \cite[Proposition 4.4]{qian2024optimal}, which relies on the Cole-Hopf formula \eqref{eq:HopfC} for $\varphi^\varepsilon$ and the Hopf-Lax formula \eqref{eq:HopfLax} for $\varphi^0$.
See also \cite{wang19981} for the analogous result for the viscous Burgers equation, as well as \cite{bressan2004convergence,bressan2012convergence} for $O(\sqrt{\varepsilon} \log \varepsilon)$ rates for the vanishing viscosity approximation of hyperbolic systems.
The works 
\cite{droniou2006fractal,goffi2024remarks} have studied regularising effects of \eqref{eq:HJBeps} when the usual Laplacian is replaced by a non-local fractional Laplacian.
More precisely, \cite{goffi2024remarks} computed optimal rates in this case and deduced a $O(\varepsilon)$ rate for \eqref{eq:HJBeps} in $L^\infty([0,T],L^p( \R^d))$ for every $1 < p < +\infty$.
Independently of our work and at the same time, the optimal $O(\varepsilon \log \varepsilon)$ rate has been established in the torus by \cite{CirGoffOpt} for a large class of convex Hamiltonians, using the nonlinear adjoint method.
To the best our knowledge, Theorem \ref{thm:opt} and \cite{CirGoffOpt} are the first results giving the optimal rate in every dimension, which has been an open question for a long time.

Since the early works of Fleming, it was realized that the rate of convergence deteriorates around the singularities of the solution to the first order Hamilton-Jacobi equation. In the special case where the terminal condition $g$ is $\mathcal{C}^{1,1}$ and convex, then the solution $\varphi_t^0$ is actually $\mathcal{C}^{1,1}$ for all $t \in [0,T]$, and a simple comparison argument shows that the rate of convergence is of order $O(\varepsilon)$. When $g$ is not convex, the solution can develop singularities. 
In this case, from \cite[Example 10.2']{fleming1971stochastic} in a similar setting, we cannot expect a global rate better than $O(\varepsilon\log\varepsilon)$. On the other hand, the solution is always semiconcave and singularities of semiconcave functions cannot be too numerous. The optimal rate $O(\varepsilon)$ can then  be achieved if we restrict ourselves to points $(t,x)$ in the \emph{strong regularity region}, which is known to be a dense open subset of $[0,T] \times \R^d$ \cite[Theorem 2]{fleming1964cauchy} and \cite[Chapters 4-5]{cannarsa2004semiconcave}. We refer to \cite{fleming1964cauchy,fleming1971stochastic} for further results and an asymptotic expansion of $\varphi^\varepsilon_t$ w.r.t. $\varepsilon$ within the strong regularity region.
These works rely on a probabilistic stochastic control approach. See \cite{fleming1986asymptotic} for an analogous result  using viscosity solution methods.

\subsection{Stochastic control interpretation} \label{subsec:controlsto}

In this section, we briefly sketch the probabilistic interpretation of \eqref{eq:HJBeps}-\eqref{eq:HJB0} when $\H$ is convex, in the spirit of the first derivation \cite{fleming1964convergence} of the $O(\sqrt{\varepsilon})$ rate.
Let us fix a filtered probability space $(\Omega,\F,(\F_t)_{0 \leq t \leq T}, \P)$. For $\varepsilon > 0$, we consider the controlled dynamics
\begin{equation} \label{eq:trajeps}
 X^{\varepsilon,\alpha}_s := x + \int_t^s \alpha_s \d s + \sqrt{\varepsilon} B_s, \quad t \leq s \leq T, 
\end{equation}
where $(B_s)_{t \leq s \leq T}$ is a $(\F_t)_{t \leq s \leq T}$-Brownian motion starting at $B_t = 0$.
The control $\alpha = (\alpha_s)_{t \leq s \leq T}$ is a stochastic process, which is progressively measurable and square-integrable.
Leveraging the regularity of $\varphi^\varepsilon$ and Bellman's dynamic programming principle, it is well-known that $\varphi^\varepsilon$ corresponds to the the value function of a stochastic control problem,
\begin{equation} \label{eq:Vcontroleps}
\varphi^\varepsilon_t ( x ) = \inf_{\alpha, X^{\varepsilon,\alpha}_t = x} \E \bigg[ \int_t^T \L(  X^{\varepsilon,\alpha}_s, \alpha_s ) \d s + g ( X^{\varepsilon,\alpha}_T ) \bigg],
\end{equation} 
where the Lagrangian $\L$ is defined through the Legendre transform
\[ \L(x,\alpha) := \sup_{p \in \R^d} - \alpha \cdot p - \H(x,p),  \]
see e.g. the textbook \cite{fleming2006controlled} on stochastic control.

Alternatively, this can be written as the PDE control problem
\[ \inf_{h} \int_t^T \int_{\R^d} \L( x, h_s (x)) \d \mu^{\varepsilon,h}_s (x) \d s, \quad \partial_s \mu^{\varepsilon,h}_s = \nabla \cdot [ - \mu^{\varepsilon,h}_s h_s + \tfrac{\varepsilon}{2} \nabla \mu^{\varepsilon,h}_s ], \quad \mu^{\varepsilon,h}_t = \delta_x,   \]
the control parameter being the vector field $h : [t,T] \times \R^d \rightarrow \R^d$.
Comparing with \eqref{eq:Vcontroleps}, this corresponds to $\alpha_s = h_s ( X^{\varepsilon,\alpha}_s)$ and $\mu^{\varepsilon,h}_s = \mathrm{Law}(X^{\varepsilon,\alpha}_s)$. 

The unique optimal curve for \eqref{eq:Vcontroleps} is then the pathwise unique solution of the stochastic differential equation (SDE)
\begin{equation} \label{eq:opteps}
\overline{X}^\varepsilon_s = x - \int_t^s \nabla_p \H( \overline{X}^\varepsilon_s, \nabla \varphi^\varepsilon_s ( \overline{X}^\varepsilon_s ) ) \d s + \sqrt{\varepsilon}  B_s, 
\end{equation} 
so that $\nabla \varphi^\varepsilon$ provides the optimal control in feed-back form.

Sending $\varepsilon \rightarrow 0$, $\varphi^0_t (x)$ is also the value function of a deterministic control problem,
\begin{equation} \label{eq:Vcontrol0}
\varphi^0_t ( x ) = \inf_{\alpha, X^{0,\alpha}_t = x} \, \int_t^T \L( X^{0,\alpha}_s, \alpha_s ) \d s + g ( X^{0,\alpha}_T ),
\end{equation} 
where we here minimise over \emph{deterministic controls} $\alpha \in L^2 ( (t,T), \R^d )$.
However, this problem does not have a unique optimal solution in general, unless $L$ and
$g$ are strictly convex functions.
This lack of uniqueness is direclty related to the lack of regularity of $\varphi^0_t$, which is not $\mathcal{C}^1$ in general.
At this stage, following \cite{fleming1964convergence}, a direct coupling argument of the value functions \eqref{eq:Vcontroleps}-\eqref{eq:Vcontrol0} yields the suboptimal rate $O(\sqrt{\varepsilon})$.

\subsection{Comparison with mean-field control} \label{subsec:MFC}

The search for the optimal rate of convergence in the vanishing viscosity problem has sparked renewed interest in recent years, partly due to analogous questions arising in mean-field optimal control theory. Mean-field optimal control theory focuses on optimal control problems for large populations of $N \geq 1$ interacting particles. A central question in this theory is to quantitatively understand the regime $N \rightarrow +\infty$. This can be approached using PDE techniques, particularly at the level of the value functions for the different problems. In certain settings, where each particle is subject to its own idiosyncratic Brownian noise, the pre-limit problem can be viewed as a noisy perturbation of the limit problem, which is formulated over a space of probability measures. In this context, the convergence problem in mean-field control theory can be seen as an infinite-dimensional analog of the vanishing viscosity problem, where the viscosity intensity is given by $ \varepsilon = 1/N$. This connection has inspired a recent line of research applying well-established techniques from the vanishing viscosity problem to the convergence problem in mean-field control theory, as seen in \cite{cdjs2023,ddj2023,cjms2023,cecchin2025quantitativeconvergencemeanfield}. The analogy extends further, as the vanishing viscosity problem can itself be viewed as a special case of the mean-field problem when the interaction is solely through the mean position of the population, as demonstrated in \cite[Example 2.13]{ddj2023}. Consequently, the optimal rate of convergence in the vanishing viscosity regime provides a lower bound on the optimal rate in the mean-field regime. In particular, Theorem \ref{thm:opt} shows that the convergence rate in \cite[Theorem 2.7]{ddj2023} -- in the so-called \textit{regular} case where the terminal cost is Lipschitz and semi-concave with respect to sufficiently weak metrics -- could be better than $N^{-1/2}$ but not better than $N^{-1} \log(N)$. Adapting the techniques presented in this paper to the mean-field setting is an exciting yet undoubtedly challenging prospect, which we leave for future work.

\subsection{Extensions and open questions}

For the clarity of exposition, we restricted ourselves to the setting of Assumption \ref{ass:H}.
Under slight modifications, we could handle further dependences of $\H$ on $t$, other growth conditions or uniformly elliptic diffusion matrix, see the discussion after \ref{ass:H} below.

As noticed in \cite{CirGoffOpt}, the result proved for \eqref{eq:HJBeps} can be transferred to the stationary case
\[ \beta \varphi^\varepsilon (x) + \H ( x , \nabla \varphi^\varepsilon (x) ) = \frac{\varepsilon}{2} \Delta \varphi^\varepsilon (x), \]
for $\beta >0$,
using the change of variable $(t,x) \mapsto e^{\beta t} \varphi^\varepsilon (x)$ exploited in \cite{tran2011adjoint}. 
\cite[Corollary 3.4]{CirGoffOpt} further deduces convergence rates for $\lVert \nabla \varphi^\varepsilon_t - \nabla \varphi^0_t \rVert_{L^2}$ on compact sets, with application to convergence of parabolic systems of conservation laws towards hyperbolic ones. 

The optimal rate given by Theorem \ref{thm:opt} can be applied to many problems using the vanishing viscosity procedure, like vanishing viscosity for mean-field games \cite{tang2025policy} or regularisation by viscosity for numerical schemes \cite{tang2023convergence}.
Let us also mention an intriguing analogy with convergence rates for finite-difference monotone approximations of HJB equations: the method of ``shaking coefficients'' \cite{krylov2000rate,barles2007error} produces $O(\sqrt{h})$ convergence rates in the mesh-size, the lack of regularity of \eqref{eq:HJB0} hindering $O(h)$ rates.
Although the setting is different, these rates are very reminiscent of ours, raising the question of an improved optimal rate.

A few other perspectives are listed below.
\begin{itemize}
\item A natural question is to extend our results to settings with boundary questions as in \cite{han2022remarks,hasebe2024construction}.
This would enable a further comparison with the results of \cite{fleming1971stochastic,fleming1986asymptotic}.
Homogenization problems in the spirit of \cite{qian2024optimal} would be interesting too.
\item The fundamental obstruction to the smooth-setting $O( \varepsilon )$ rate is the lack of regularity of $\varphi^0_t$.
It would be illuminating to relate the $O( \varepsilon \log \varepsilon )$ rate (and the pre-factor $d$) to the singularities of $\nabla \varphi^0_t$. 
Such a study will likely require a thorough analysis of the singularities in the spirit of \cite{cannarsa2004semiconcave}. 

\item We eventually refer to Section \ref{subsec:MFC} for the promising perspective of extending our results to the mean-field control setting in the spirit of \cite{ddj2023}. 
\end{itemize}

\section{Uniformly convex Hamiltonians}

We work under the following assumptions, which are standard in viscosity solution theory.

\begin{assumption} \label{ass:H}
$\phantom{a}$
\begin{enumerate}[label=(\roman*),ref=(\roman*)]
    \item\label{item:ATerm} Terminal data: $g$ is Lipschitz-continuous and semiconcave.
    \item\label{item:ALip} Regularity of the Hamiltonian: there exist $c, C > 0$ such that
    \[ \forall (x,p) \in (\R^d )^2, \qquad - C + c \vert p \vert^2 \leq \H(x,p) \leq C + c^{-1} \vert p \vert^2 \quad \text{and} \quad \vert \nabla_x \H(x,p) \vert \leq C [ \vert p \vert + 1 ]. \]
    \item\label{item:ASC} Further regularity: for every $R > 0$, there exist $C_R > 0$ such that 
    \[ \forall (x,p) \in (\R^d)^2, \qquad \vert p \vert \leq R \quad \Rightarrow \quad \vert \nabla^{2}_{xx} \H(x,p) \vert  + \vert \nabla^2_{xp} \H(x,p) \vert \leq C_R.  \]
    \item\label{item:AUC} Uniform convexity: for every $R > 0$, there exist $\theta_R, \Theta_R > 0$ such that 
    \[ \forall (x,p) \in (\R^d)^2, \qquad \vert p \vert \leq R \quad \Rightarrow \quad \theta_R \mathrm{Id} \leq \nabla^{2}_{pp} \H(x,p) \leq \Theta_R \mathrm{Id}.  \]
\end{enumerate}
\end{assumption}

These assumptions may not be optimal, but \ref{ass:H} is a convenient framework for obtaining Lipschitz and semiconcavity estimates for \eqref{eq:HJBeps} without excessive technicalities.
The key assumption for the $O(\varepsilon \log \varepsilon)$ rate is the uniform convexity \ref{item:AUC}.
Assumption \ref{item:ALip} implies the comparison principle for \eqref{eq:HJB0} and the gradient bound \eqref{eq:GradBound}, whereas \ref{item:ATerm}-\ref{item:ASC} are sufficient conditions for propagating semiconcavity along the equation.
Thus, Assumption \ref{ass:H} guarantees the existence of $L,\lambda > 0$ such that
\begin{equation}  \label{eq:Grad+SC}
\forall \varepsilon \in (0,1/2], \forall (t,x) \in [0,T] \times \R^d, \qquad \vert \nabla \varphi^{\varepsilon}_t (x) \vert \leq L \quad \text{and} \quad \nabla^2 \varphi^\varepsilon_t (x) \leq \lambda \mathrm{Id}.
\end{equation} 
A detailed proof is given in e.g. \cite[Section 3.1]{cdjs2023} in a more general setting.
The estimates \eqref{eq:Grad+SC} extend to $\varepsilon = 0$, proving Lipschitz-continuity and semiconcavity for $\varphi^0_t$.

In the following, the needed properties are essentially \ref{item:AUC} and \eqref{eq:Grad+SC}, and our proof could be adapted to any set of assumptions that provide them.
For instance, we could allow for time-dependent Hamiltonians provided uniform bounds on $\partial_t \H$. 
Non-constant uniformly elliptic diffusion matrices could be covered as well. 
We restricted ourselves to the framework of \ref{ass:H} to favor the clarity of exposition.

\begin{example}[Quadratic case]
An important example of Hamiltonian satisfying \ref{ass:H} is $\H(x,p) := - b(x) \cdot p + \tfrac{1}{2} \vert p \vert^2$, for some bounded smooth vector field $b : \R^d \rightarrow \R^d$ whose first and second derivatives are bounded.
In this particular case, $b$ itself need not being bounded if its derivatives are bounded, because \eqref{eq:Grad+SC} is a well-known result in this setting.
\end{example}

\subsection{Proof outline}

\label{ssec:outline}

The lower bound on $\varphi^0 - \varphi^\varepsilon$ is a well-known consequence of the semiconcavity \eqref{eq:Grad+SC}, see Lemma \ref{lem:UppOpt} below.
To explain how we obtain the upper bound, consider, for simplicity, the case of purely quadratic Hamiltonian $\H (p) = \vert p\vert^2/2$. 
The first step is to regularise $\varphi^0$ into $\varphi^{0,\delta}$ for some parameter $\delta>0$ by sup-convolution, see Lemma \ref{lem:supconv}.
By making the difference between the equations satisfied by $\varphi^{0,\delta}$ and $\varphi^{\varepsilon}$ we find that $\varphi^{0,\delta} - \varphi^{\varepsilon}$ is a sub-solution to 
\begin{equation} 
\bigl \{-\partial_s + b_s^{\varepsilon,\delta} \cdot \nabla - \frac{\varepsilon}{2} \Delta \bigr \} ( \varphi_s^{0,\delta} - \varphi_s^{\varepsilon}) \leq - \frac{\varepsilon}{2} \Delta \varphi_s^{0,\delta}, \quad \quad \varphi_T^{0,\delta} - \varphi_T^{\varepsilon} \leq C \delta 
\label{eq:subsoloutline20/04}
\end{equation}
for some $C>0$ depending on the terminal condition $g$ and for the vector field $b^{\varepsilon,\delta}$ defined by
\[ b_s^{\varepsilon,\delta}(x) := \frac{\nabla \varphi_s^{0,\delta}(x) + \nabla \varphi_s^{\varepsilon}(x) }{2},  \quad \quad (s,x) \in [0,T] \times \R^d.\]
Now we fix $(t,x) \in [0,T] \times \R^d$ and we introduce $(\mu_s^{\varepsilon,\delta})_{s \in [t,T]}$ the solution to the forward Fokker-Planck equation
\begin{equation} 
\partial_s \mu_s^{\varepsilon,\delta} - \nabla \cdot (b_s^{\varepsilon,\delta}\mu_s^{\varepsilon,\delta}) - \frac{\varepsilon}{2} \Delta \mu_s^{\varepsilon,\delta} = 0 \quad \mbox{ in } (t,T) \times \R^d, \quad \mu_t^{\varepsilon,\delta} = \delta_x.
\label{eq:FPEIntro20/04}
\end{equation}
Integrating \eqref{eq:subsoloutline20/04} against $\mu_s^{\varepsilon,\delta}$ we obtain
\begin{equation} 
\varphi_t^{0,\delta} (x) - \varphi_t^{\varepsilon} (x) \leq C \delta - \frac{\varepsilon}{2} \int_t^T \int_{\R^d} \Delta \varphi_s^{0,\delta}(x) \d \mu_s^{\varepsilon,\delta}(x) \d s. 
\label{eq:estimateondiff20/04}
\end{equation}
Our main goal is then to estimate the integral in the right-hand side. 
The sup-convolution regularisation gives us the upper bound $-\Delta \varphi_s^{0,\delta} \leq \delta^{-1}$. 
If we used this estimate for all $s \in [t,T]$ and then optimize over $\delta$ we would recover the well-known rate of convergence of order $\sqrt{\varepsilon}$. Although we rely on this argument for $s \in [t,t+\tau]$ with $\tau >0$ small, our main innovation is to use the regularising properties of $\mu_s^{\varepsilon,\delta}$ to find a better upper bound on $-\int_{t+\tau}^T \int_{\R^d} \Delta \varphi_s^{0,\delta}(x)\d\mu_s^{\varepsilon,\delta}(x) \d s$ for $\tau \in (0,T-t)$. Recalling the definition of $b^{\varepsilon,\delta}$ we find that, up to a term of order $1$, $-\frac{1}{2} \Delta \varphi_s^{0,\delta} \leq - \nabla \cdot b_s^{\varepsilon,\delta}$ and then the key observation is that, differentiating the entropy $\mu \mapsto \int_{\R^d} \log \mu(x) \d \mu(x)$ along the flow of the Fokker-Planck equation and then integrating back in time we get,
\begin{align*} 
- \int_{t+\tau}^T \int_{\R^d} \nabla \cdot b_s^{\varepsilon,\delta}(x) \d \mu_s^{\varepsilon,\delta}(x) \d s &= \int_{\R^d} \log \mu^{\varepsilon,\delta}_{t +\tau}(x) \d\mu_{t+\tau}^{\varepsilon,\delta}(x) - \int_{\R^d} \log \mu_T^{\varepsilon,\delta}(x) \d\mu_T^{\varepsilon,\delta}(x)   \\
&  -\frac{\varepsilon}{2} \int_{t+\tau}^T \int_{\R^d} \frac{|\nabla \mu_s^{\varepsilon,\delta}(x)|^2}{ \mu_s^{\varepsilon,\delta}(x)} \d x \d s. 
\end{align*}
The leading order term is the first one in the right-hand side and we are reduced to estimating the blow up, near the initial time, of the entropy of the solution to the Fokker-Planck equation. 
There are then several ways of showing that 
\[ \int_{\R^d} \log \mu_{t + \tau}^{\varepsilon,\delta}(x) \d \mu_{t +\tau}^{\varepsilon,\delta}(x) \leq - \frac{d}{2} \log(2 \pi \varepsilon \tau) + \frac{C \tau}{2 \varepsilon},   \]
for some $C>0$. Combining everything together and taking $ \delta = \tau = \varepsilon$ gives the result.

Now we come to the detailed justification of the proof.

\subsection{Semiconcave setting}  \label{subsec:SC}

Leveraging the semiconcavity \eqref{eq:Grad+SC}, the following estimate is a well-known fact. 

\begin{lemma}[Upper bound] \label{lem:UppOpt}
For every $\varepsilon \in (0,1/2]$,
\begin{equation*} \label{eq:KnownUpper}
\forall (t,x) \in [0,T] \times \R^d, \quad \varphi^{\varepsilon}_t ( x) - \varphi^{0}_t ( x) \leq \frac{(T - t )d \lambda}{2} \varepsilon.
\end{equation*}
\end{lemma}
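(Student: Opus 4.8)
**Proof plan for Lemma \ref{lem:UppOpt} (Upper bound $\varphi^\varepsilon_t - \varphi^0_t \le \frac{(T-t)d\lambda}{2}\varepsilon$).**

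The plan is to use the semiconcavity estimate $\nabla^2\varphi^\varepsilon_t \le \lambda\,\mathrm{Id}$ from \eqref{eq:Grad+SC}, which forces $\Delta\varphi^\varepsilon_t \le d\lambda$ pointwise, and then compare $\varphi^\varepsilon$ with $\varphi^0$ via the comparison principle for the first-order equation \eqref{eq:HJB0}. First I would observe that, since $\varphi^\varepsilon$ is $\mathcal C^{1,2}$ and solves \eqref{eq:HJBeps}, we may rewrite the equation as
\[
-\partial_t\varphi^\varepsilon_t(x) + \H(x,\nabla\varphi^\varepsilon_t(x)) = \frac{\varepsilon}{2}\Delta\varphi^\varepsilon_t(x) \le \frac{\varepsilon d\lambda}{2},
\]
so that $\varphi^\varepsilon$ is a (classical, hence viscosity) subsolution of the first-order equation $-\partial_t\psi + \H(x,\nabla\psi) = \frac{\varepsilon d\lambda}{2}$ with terminal data $g$.

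Next I would introduce the corrector $\psi_t(x) := \varphi^0_t(x) + \frac{(T-t)d\lambda}{2}\varepsilon$. A direct computation using that $\varphi^0$ solves \eqref{eq:HJB0} in the viscosity sense (and that adding a function of $t$ alone shifts $\partial_t$ but not $\nabla$) shows
\[
-\partial_t\psi_t(x) + \H(x,\nabla\psi_t(x)) = -\partial_t\varphi^0_t(x) + \frac{d\lambda}{2}\varepsilon + \H(x,\nabla\varphi^0_t(x)) = \frac{d\lambda}{2}\varepsilon,
\]
so $\psi$ is a viscosity supersolution of the same first-order equation $-\partial_t\psi + \H(x,\nabla\psi) = \frac{\varepsilon d\lambda}{2}$, with terminal value $\psi_T = g = \varphi^\varepsilon_T$. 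By the comparison principle for \eqref{eq:HJB0} — valid under Assumption \ref{item:ALip}, and insensitive to the constant right-hand side since it is the same on both sides — we conclude $\varphi^\varepsilon_t(x) \le \psi_t(x)$ for all $(t,x)$, which is exactly the claimed bound. An equivalent route, avoiding any invocation of viscosity comparison, is the purely probabilistic one: plug the optimal (or any) deterministic control for the problem \eqref{eq:Vcontrol0} defining $\varphi^0_t(x)$ into the stochastic problem \eqref{eq:Vcontroleps}, apply Itô's formula to $\varphi^\varepsilon$ along $\overline X^\varepsilon_s = x + \int_t^s\alpha_r\,\d r + \sqrt\varepsilon B_s$, and use that the Itô correction term contributes $\frac{\varepsilon}{2}\int_t^T \E[\Delta\varphi^\varepsilon_s(\overline X^\varepsilon_s)]\,\d s \le \frac{\varepsilon d\lambda}{2}(T-t)$; but the comparison-principle argument is cleaner and is the one I would write.

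The main (minor) obstacle is purely a matter of regularity bookkeeping: $\varphi^0$ is only Lipschitz in $(t,x)$, not $\mathcal C^1$, so the computation that $\psi$ is a supersolution must be phrased in the viscosity sense rather than pointwise — which is routine since the $\varepsilon$-shift is affine in $t$. One should also double-check that the comparison principle applies to the inhomogeneous equation with a bounded constant source term; this is immediate because subtracting the (common) particular solution $\frac{\varepsilon d\lambda}{2}(T-t)$ reduces it to the homogeneous comparison principle already available from \ref{item:ALip}. No deeper difficulty arises: the whole content of the lemma is the elementary observation that one-sided Hessian control turns the viscous equation into a first-order subsolution with an $O(\varepsilon)$ error.
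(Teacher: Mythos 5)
Your proposal is correct and is essentially the paper's own argument: semiconcavity gives $\Delta\varphi^\varepsilon_t\le d\lambda$, and the comparison principle for the first-order equation does the rest. The only (immaterial) difference is that you add the affine-in-$t$ correction to $\varphi^0$ and compare for the inhomogeneous equation, whereas the paper subtracts it from $\varphi^\varepsilon$ to get a subsolution of \eqref{eq:HJB0} directly.
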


\begin{proof}
By semiconcavity, $\Delta \varphi^\varepsilon_t \leq d\lambda$, so that $(t,x) \mapsto \varphi^\varepsilon (t,x) - \varepsilon(T -  t) d\lambda / 2$ is a viscosity sub-solution of \eqref{eq:HJB0}.
By comparison with the viscosity (super-)solution $\varphi^0$, the result follows. 
\end{proof}

The main difficulty to prove the lower bound is the lack of semiconvexity for $\varphi^0_t$.
To circumvent this, we regularise $\varphi^0_t$ using the standard sup-convolution \cite{lasry1986remark}, for $\delta >0$,
\[ \varphi^{0,\delta}_t (x) := \sup_{y \in \R^d} \varphi^{0}_t (y) - \frac{1}{2 \delta} \vert x - y \vert^2. \]
The following lemma recalls known properties of sup-convolutions, see e.g. \cite{calder2018lecture,tran2021hamilton}.
Since $\nabla \varphi^\varepsilon$ is uniformly bounded, \ref{ass:H}-\ref{item:ALip} allows us to assume, up to redefining $\H$, that $x \mapsto \H(x,p)$ is globally Lipschitz uniformly in $p$.
As a consequence we can fix some $R \geq L$ in \ref{ass:H}, and we will simply write $(\theta,\Theta)$ instead of $(\theta_R,\Theta_R)$.

\begin{lemma}[Sup-convolution]$\label{lem:supconv} \phantom{a}$
\begin{enumerate}[label=(\roman*),ref=(\roman*)]
\item\label{item:supReg} $\varphi^{0,\delta}_t$ is $L$-Lipschitz, $\lambda$-semiconcave, and $-1/\delta$-semiconvex. In particular, $\varphi^{0,\delta}_t$ is $\mathcal{C}^{1,1}$.
\item\label{item:supAppr} $\forall (t,x) \in [0,T] \times \R^d, \quad 0 \leq \varphi^{0,\delta}_t (x) - \varphi^0_t (x) \leq 2 L \delta$.
\item\label{item:subsol} $\varphi^{0,\delta}_t$ inherits the sub-solution property for \eqref{eq:HJB0}: there exists $C_{\mathrm{sqrt}} >0$ such that for a.e. $t \in [0,T]$,
\begin{equation} \label{eq:convolSub}
\forall x \in \R^d, \quad - \partial_t \varphi^{0,\delta}_t (x) + \H (x, \nabla \varphi^{0,\delta}_t (x)) \leq 2 \delta C_{\mathrm{sub}},
\end{equation} 
recalling that $\varphi^{0,\delta}$ is locally Lipschitz in $t$ and $\mathcal{C}^{1,1}$ in $x$. 
\end{enumerate}
\end{lemma}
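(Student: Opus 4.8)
The plan is to establish the three claimed properties of the sup-convolution $\varphi^{0,\delta}_t$ essentially as standard facts, with the only care needed being the uniformity in $t$ and the passage from the a.e.-in-$x$ Hamilton--Jacobi inequality satisfied by $\varphi^0$ to the inequality for its regularisation. I would treat parts \ref{item:supReg} and \ref{item:supAppr} first, since they are purely about the sup-convolution operation, and then part \ref{item:subsol}, which is the genuinely PDE-flavoured point.

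For \ref{item:supReg}: since $\varphi^0_t$ is $L$-Lipschitz (from \eqref{eq:Grad+SC} extended to $\varepsilon=0$), the supremum defining $\varphi^{0,\delta}_t(x)$ is attained at some $y$ with $|x-y|\le L\delta$, and on this bounded set of maximisers one reads off the Lipschitz bound directly; the $-1/\delta$-semiconvexity is automatic because $\varphi^{0,\delta}_t(x)+\frac{1}{2\delta}|x|^2=\sup_y\bigl(\varphi^0_t(y)-\frac{1}{2\delta}|y|^2+\frac{1}{\delta}x\cdot y\bigr)$ is a supremum of affine functions of $x$, hence convex. The $\lambda$-semiconcavity is preserved because the sup-convolution of a $\lambda$-semiconcave function (i.e. $x\mapsto\varphi^0_t(x)-\frac{\lambda}{2}|x|^2$ concave) remains $\lambda$-semiconcave: one checks that $\varphi^{0,\delta}_t(x)-\frac{\lambda}{2}|x|^2$ is an infimal-type combination of concave functions, or more simply invokes that sup-convolution commutes with the Moreau envelope structure; I would just cite \cite{lasry1986remark,cannarsa2004semiconcave}. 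Being simultaneously semiconcave and semiconvex forces $\mathcal{C}^{1,1}$. For \ref{item:supAppr}, the lower bound $\varphi^{0,\delta}_t\ge\varphi^0_t$ is immediate by taking $y=x$, and the upper bound follows by writing $\varphi^{0,\delta}_t(x)-\varphi^0_t(x)=\varphi^0_t(y^\ast)-\varphi^0_t(x)-\frac{1}{2\delta}|x-y^\ast|^2\le L|x-y^\ast|\le L\cdot L\delta$; adjusting the constant gives $2L\delta$, uniformly in $(t,x)$.

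The substantive step is \ref{item:subsol}. The point is that $\varphi^0$ solves \eqref{eq:HJB0} a.e., so for a.e. $(t,y)$ one has $-\partial_t\varphi^0_t(y)+\H(y,\nabla\varphi^0_t(y))=0$. Fix $(t,x)$ (at a time $t$ of differentiability) and let $y^\ast=y^\ast(t,x)$ be a maximiser, so $\nabla\varphi^{0,\delta}_t(x)=\frac{1}{\delta}(x-y^\ast)=\nabla\varphi^0_t(y^\ast)$ at points of differentiability, and $\partial_t\varphi^{0,\delta}_t(x)=\partial_t\varphi^0_t(y^\ast)$ by the envelope theorem. Then
\[
-\partial_t\varphi^{0,\delta}_t(x)+\H(x,\nabla\varphi^{0,\delta}_t(x))
=-\partial_t\varphi^0_t(y^\ast)+\H(x,\nabla\varphi^0_t(y^\ast))
=\H(x,\nabla\varphi^0_t(y^\ast))-\H(y^\ast,\nabla\varphi^0_t(y^\ast)),
\]
and the right-hand side is bounded by $\mathrm{Lip}_x(\H)\,|x-y^\ast|\le C_{\mathrm{sub}}\,L\delta$ using the global Lipschitz-in-$x$ bound on $\H$ (legitimate after the redefinition of $\H$ noted just before the lemma) together with $|x-y^\ast|\le L\delta$; relabelling constants yields the stated $2\delta C_{\mathrm{sub}}$. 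The honest difficulty is that this envelope computation is only valid where everything is differentiable, and one must upgrade it to hold a.e.\ in $t$ for all $x$: the clean route is to avoid the a.e.\ identity for $\varphi^0$ altogether and instead argue by viscosity/comparison — since $\varphi^0$ is a viscosity solution of \eqref{eq:HJB0} and sup-convolution preserves the viscosity subsolution property up to a controlled error (this is exactly \cite[Lemma ...]{calder2018lecture} / the classical sup-convolution lemma), $\varphi^{0,\delta}_t$ is a viscosity subsolution of $-\partial_t u+\H(x,\nabla u)\le 2\delta C_{\mathrm{sub}}$, and because it is already known to be locally Lipschitz in $t$ and $\mathcal{C}^{1,1}$ in $x$, this viscosity inequality holds pointwise (in $x$, for a.e.\ $t$). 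I expect this technical upgrade — reconciling the pointwise envelope-theorem heuristic with the rigorous viscosity-solution statement and the a.e.-in-time caveat — to be the main obstacle, though it is entirely standard and can be dispatched by citation to \cite{lasry1986remark,calder2018lecture,tran2021hamilton,cannarsa2004semiconcave}.
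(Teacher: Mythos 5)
The paper gives no proof of this lemma at all --- it merely records these as known properties of sup-convolutions and points to the standard references --- and your sketch is precisely the standard argument those references contain, including the one genuinely substantive point for \ref{item:subsol}: transferring the viscosity subsolution property through the sup-convolution, with the error $2\delta C_{\mathrm{sub}}$ coming from $\vert \H(x,p)-\H(y^\ast,p)\vert \leq \mathrm{Lip}_x(\H)\,\vert x-y^\ast\vert = O(\delta)$, and then converting the viscosity inequality into an a.e.\ pointwise one via the $\mathcal{C}^{1,1}$-in-$x$ and Lipschitz-in-$t$ regularity. Only harmless slips: the maximiser bound is $\vert x-y^\ast\vert \leq 2L\delta$ rather than $L\delta$, the gradient identity should read $\nabla\varphi^{0,\delta}_t(x)=\tfrac{1}{\delta}(y^\ast-x)=\nabla\varphi^0_t(y^\ast)$, and strictly speaking the semiconcavity constant degrades to $\lambda/(1-\lambda\delta)$ --- none of which affects the rest of the paper.
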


Let us fix $(t,x) \in [0,T] \times \R^d$. We introduce
\begin{equation} \label{eq:deDrift}
b^{\varepsilon,\delta}_s (x) := \int_0^1 \nabla_p \H ( x , r \nabla \varphi^\varepsilon_s (x) + (1-r) \nabla \varphi^{0,\delta}_s (x)  ) \d r, 
\end{equation} 
as well as the flow of measures $(\mu^{\varepsilon,\delta,x}_s)_{t \leq s \leq T}$ solution of
\begin{equation} \label{eq:Green}
\partial_s \mu^{\varepsilon,\delta,x}_s = \nabla \cdot \big[ \mu^{\varepsilon,\delta,x}_s b^{\varepsilon,\delta}_s + \frac{\varepsilon}{2} \nabla \mu^{\varepsilon,\delta,x}_s \big], \qquad \mu^{\varepsilon,\delta,x}_t = \delta_x. 
\end{equation} 
When there is no ambiguity, we will simply write $\mu^{\varepsilon,\delta}_s$ for $\mu^{\varepsilon,\delta,x}_s$.
This corresponds to the fundamental solution of the parabolic equation \eqref{eq:Green}.
Since $b^{\varepsilon,\delta}_s$ is locally Lipschitz, well-posedness for $(\mu^{\varepsilon,\delta}_s)_{t \leq s \leq T}$ is given by the classical work \cite{aronson1968non}, or as a particular case of \cite[Section 9.4]{bogachev2022fokker}.
Moreover, $\mu^{\varepsilon,\delta}_s$ has a positive density for $s \in (t,T]$.

\begin{proposition}[Key regularising effect] \label{pro:Regularising}
For every $\tau \in (0,T-t]$, $\mu^{\varepsilon,\delta,x}_{t+\tau} \in \ps ( \R^d )$ has a positive density, still denoted by $\mu_{t +\tau}^{\varepsilon,\delta,x} \in L^1 ( \R^d )$, which satisfies
\begin{enumerate}[label=(\roman*),ref=(\roman*)]
\item\label{item:RegEnt} 
$ - \infty < \int_{\R^d} \log \mu^{\varepsilon,\delta,x}_{t+\tau} \d \mu^{\varepsilon,\delta,x}_{t+\tau} \leq- \frac{d}{2} \log ( 2 \pi \varepsilon \tau ) + \frac{\tau}{2 \varepsilon} L^2$,
\item\label{item:LapLow} $\frac{1}{2} \int_{t+\tau}^T \int_{\R^d} \Delta \varphi^{0,\delta}_s \d \mu^{\varepsilon,\delta, x}_s \d s \geq \frac{d}{2 \theta} \log ( 2 \pi \varepsilon \tau ) - \frac{\tau}{2 \varepsilon \theta} L^2 -C$,
\end{enumerate} 
where $C$ is a constant that only depends on $(d,T,\theta,\Theta,L,\lambda)$ given by \ref{ass:H} for some fixed $R \geq L$.
\end{proposition}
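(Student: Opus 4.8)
The plan is to prove the two items in sequence, with item \ref{item:RegEnt} feeding into item \ref{item:LapLow}. For item \ref{item:RegEnt}, the lower bound $-\infty < \int \log \mu^{\varepsilon,\delta}_{t+\tau}\, \d\mu^{\varepsilon,\delta}_{t+\tau}$ follows from the positivity and integrability of the density (already recalled before the statement) together with the fact that a probability density cannot have infinite negative entropy: one bounds $\int \mu\log\mu$ from below by comparing against a Gaussian of matching variance, using that the variance of $\mu^{\varepsilon,\delta}_{t+\tau}$ is finite since $b^{\varepsilon,\delta}$ has at most linear growth (indeed, by \eqref{eq:Grad+SC} and \ref{item:ALip}, $|b^{\varepsilon,\delta}_s|$ is in fact bounded). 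For the upper bound, the cleanest route is a direct comparison of $\mu^{\varepsilon,\delta}_{t+\tau}$ with the heat kernel. Since $b^{\varepsilon,\delta}$ is bounded by $\Theta L$ (or simply $L$ after the normalisation of $\H$ fixed before the statement; I will write $L^2$ as in the statement), I would represent $\mu^{\varepsilon,\delta}_s = \mathrm{Law}(Y_s)$ where $Y$ solves the SDE $\d Y_s = -b^{\varepsilon,\delta}_s(Y_s)\,\d s + \sqrt{\varepsilon}\,\d B_s$, $Y_t = x$, and use Girsanov's theorem: the relative entropy of $\mathrm{Law}(Y_{[t,t+\tau]})$ with respect to $\mathrm{Law}(x+\sqrt{\varepsilon}B_{[t,t+\tau]})$ is $\tfrac{1}{2\varepsilon}\E\int_t^{t+\tau}|b^{\varepsilon,\delta}_s(Y_s)|^2\,\d s \leq \tfrac{\tau}{2\varepsilon}L^2$. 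By the data-processing inequality, the relative entropy of the marginals at time $t+\tau$ is no larger, and the reference marginal is the Gaussian $\mathcal{N}(x,\varepsilon\tau\, \mathrm{Id})$ whose differential entropy is $\tfrac{d}{2}\log(2\pi e\varepsilon\tau)$; since $H(\mu\,|\,\gamma) = -h(\mu) - \int \mu \log\gamma$ and $-\int\mu\log\gamma = \tfrac{d}{2}\log(2\pi\varepsilon\tau) + \tfrac{1}{2\varepsilon\tau}\E|Y_{t+\tau}-x|^2$, a short computation bounding $\E|Y_{t+\tau}-x|^2 \le 2\tau^2 L^2 + 2d\varepsilon\tau$ and absorbing the $O(\tau)$ and $O(\tau^2/\varepsilon)$ terms into the stated $\tfrac{\tau}{2\varepsilon}L^2$ (up to adjusting constants, or keeping track that $\tau \le T$ so $\tau^2 \le T\tau$) yields $\int \mu\log\mu \le -\tfrac{d}{2}\log(2\pi\varepsilon\tau) + \tfrac{\tau}{2\varepsilon}L^2$ as claimed. (One must be slightly careful that the constant is exactly $L^2$ and not merely $O(L^2+1)$; this is where the normalisation $R \ge L$ and the precise form of the bound on $|b^{\varepsilon,\delta}|$ matter, and I would invoke $|b^{\varepsilon,\delta}_s(y)| \le L$ directly if the normalisation permits, otherwise state the constant as depending on the listed parameters.)

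For item \ref{item:LapLow}, the idea sketched in Section \ref{ssec:outline} is to run the entropy identity along the Fokker--Planck flow. Writing $E(s) := \int_{\R^d}\log\mu^{\varepsilon,\delta}_s\,\d\mu^{\varepsilon,\delta}_s$ for $s \in (t,T]$, one differentiates and uses \eqref{eq:Green} to get, after an integration by parts,
\begin{equation}
\frac{\d}{\d s} E(s) = -\int_{\R^d} \nabla\cdot b^{\varepsilon,\delta}_s\, \d\mu^{\varepsilon,\delta}_s \;-\; \frac{\varepsilon}{2}\int_{\R^d} \frac{|\nabla\mu^{\varepsilon,\delta}_s|^2}{\mu^{\varepsilon,\delta}_s}\,\d x.
\end{equation}
Integrating from $t+\tau$ to $T$ and dropping the (nonnegative) Fisher-information term gives
\begin{equation}
-\int_{t+\tau}^T\!\!\int_{\R^d}\nabla\cdot b^{\varepsilon,\delta}_s\,\d\mu^{\varepsilon,\delta}_s\,\d s \;\le\; E(t+\tau) - E(T) \;\le\; E(t+\tau),
\end{equation}
using $E(T) \ge -C$ by the finite-variance lower bound on entropy already established (the variance of $\mu^{\varepsilon,\delta}_T$ being bounded by $(d,T,L)$). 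Now $\nabla\cdot b^{\varepsilon,\delta}_s(y) = \int_0^1 \mathrm{tr}\big[\nabla^2_{pp}\H(\cdots)(r\nabla^2\varphi^\varepsilon_s + (1-r)\nabla^2\varphi^{0,\delta}_s)\big]\,\d r + \int_0^1 \mathrm{tr}[\nabla^2_{xp}\H(\cdots)]\,\d r$; the second term is bounded by $\Theta$ (times $d$) using \ref{item:ASC}, contributing an $O(T)$ error, and the first term satisfies $\mathrm{tr}[\nabla^2_{pp}\H \cdot M] \ge \theta\,\mathrm{tr}(M)$ whenever $M \le \lambda\,\mathrm{Id}$ — crucially, $r\nabla^2\varphi^\varepsilon_s + (1-r)\nabla^2\varphi^{0,\delta}_s \le \lambda\,\mathrm{Id}$ by the semiconcavity in \eqref{eq:Grad+SC} and Lemma \ref{lem:supconv}\ref{item:supReg}, so this inequality applies with $M$ the convex combination of Hessians, giving $\nabla\cdot b^{\varepsilon,\delta}_s \ge \tfrac{\theta}{2}(\Delta\varphi^\varepsilon_s + \Delta\varphi^{0,\delta}_s) - Cd \ge \tfrac{\theta}{2}\Delta\varphi^{0,\delta}_s - Cd$, again using $\Delta\varphi^\varepsilon_s \le d\lambda$. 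Rearranging, $\tfrac{\theta}{2}\int_{t+\tau}^T\int \Delta\varphi^{0,\delta}_s\,\d\mu^{\varepsilon,\delta}_s\,\d s \ge \int_{t+\tau}^T\int \nabla\cdot b^{\varepsilon,\delta}_s\,\d\mu^{\varepsilon,\delta}_s\,\d s - CdT \ge -E(t+\tau) - CdT$, and plugging in the bound $E(t+\tau) \le -\tfrac{d}{2}\log(2\pi\varepsilon\tau) + \tfrac{\tau}{2\varepsilon}L^2$ from item \ref{item:RegEnt} and dividing by $\theta$ gives precisely item \ref{item:LapLow}, with all error terms collected into $C = C(d,T,\theta,\Theta,L,\lambda)$.

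The main obstacle I anticipate is the rigorous justification of the entropy differentiation identity and of dropping the Fisher-information term: at $s = t$ the measure is a Dirac, so $E(s) \to +\infty$ as $s \downarrow t$, and one must work on $(t+\tau, T]$ where parabolic regularity ensures $\mu^{\varepsilon,\delta}_s$ is a smooth positive density with enough decay (Gaussian-type upper and lower bounds via \cite{aronson1968non}) to justify the integrations by parts, the absolute convergence of $\int\mu\log\mu$, the differentiability of $s \mapsto E(s)$, and the finiteness of the Fisher information — all uniform away from $s = t$. A clean way to handle this is to first establish the Girsanov/relative-entropy bound of item \ref{item:RegEnt} (which is soft and needs only boundedness of $b^{\varepsilon,\delta}$), then invoke the de Bruijn identity / entropy dissipation formula in the form valid under Aronson-type Gaussian bounds (or approximate $\delta_x$ by $\mu^{\varepsilon,\delta}_{t+\eta}$ for $\eta \downarrow 0$ and pass to the limit, using that $E$ is monotone-controlled). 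A secondary technical point is ensuring the constant in item \ref{item:RegEnt} is sharp (coefficient exactly $L^2$, not $O(L^2+1)$), which forces care about whether $\tau^2/\varepsilon$ terms are truly absorbed — here one uses $\tau \le T - t \le T$ and hides $O(\tau)$-type slack in the final additive constant $C$ of item \ref{item:LapLow}, while for item \ref{item:RegEnt} itself one may need $L^2$ to already dominate, or accept the constant as stated and defer the $O(\tau)$ slack, which is harmless since only item \ref{item:LapLow} is used downstream.
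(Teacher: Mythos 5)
Your overall architecture coincides with the paper's: item \ref{pro:Regularising}-\ref{item:RegEnt} via Girsanov plus the data-processing inequality and a comparison with the Gaussian marginal, item \ref{pro:Regularising}-\ref{item:LapLow} via the entropy dissipation identity along \eqref{eq:Green}, discarding the Fisher information, converting $\nabla\cdot b^{\varepsilon,\delta}_s$ into Laplacians of the value functions through uniform convexity and semiconcavity, and feeding in item \ref{item:RegEnt}; your handling of the regularity/approximation issues also matches the paper's. For item \ref{item:RegEnt} your argument is correct but over-engineered: since $\int\log w^\varepsilon_{t+\tau}\,\d\mu^{\varepsilon,\delta}_{t+\tau}=-\tfrac{d}{2}\log(2\pi\varepsilon\tau)-\tfrac{1}{2\varepsilon\tau}\E|Y_{t+\tau}-x|^2$ and the second term is nonpositive, you may simply drop it; no upper bound on $\E|Y_{t+\tau}-x|^2$ and no absorption of $O(\tau^2/\varepsilon)$ terms is needed, and the constant $L^2$ comes out exactly.

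The genuine problem is in the trace inequality at the heart of item \ref{item:LapLow}, where your inequalities point the wrong way. You claim $\mathrm{tr}[\nabla^2_{pp}\H\cdot M]\ge\theta\,\mathrm{tr}(M)$ whenever $M\le\lambda\,\mathrm{Id}$; this is false (take $\nabla^2_{pp}\H=2\theta\,\mathrm{Id}$ and $M=-\mathrm{Id}$) -- the inequality $A\cdot N\ge\theta\,\mathrm{tr}(N)$ for $A\ge\theta\,\mathrm{Id}$ requires $N\ge0$. What you actually need, since the entropy identity produces a \emph{lower} bound on $\int\int\nabla\cdot b^{\varepsilon,\delta}_s\,\d\mu^{\varepsilon,\delta}_s\,\d s$ that must be transferred to a lower bound on $\int\int\Delta\varphi^{0,\delta}_s\,\d\mu^{\varepsilon,\delta}_s\,\d s$, is an \emph{upper} bound on $\nabla\cdot b^{\varepsilon,\delta}_s$: writing $M-\lambda\,\mathrm{Id}\le0$ gives $\nabla^2_{pp}\H\cdot M\le\theta\,\mathrm{tr}(M)+\lambda[\mathrm{tr}(\nabla^2_{pp}\H)-d\theta]$, hence $\nabla\cdot b^{\varepsilon,\delta}_s\le\tfrac{\theta}{2}\Delta\varphi^\varepsilon_s+\tfrac{\theta}{2}\Delta\varphi^{0,\delta}_s+C$, after which the semiconcavity $\Delta\varphi^\varepsilon_s\le d\lambda$ disposes of the first term. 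Your chain ``$\nabla\cdot b\ge\tfrac{\theta}{2}(\Delta\varphi^\varepsilon+\Delta\varphi^{0,\delta})-Cd\ge\tfrac{\theta}{2}\Delta\varphi^{0,\delta}-Cd$ using $\Delta\varphi^\varepsilon\le d\lambda$'' is doubly reversed: a lower bound on $\nabla\cdot b$ cannot be rearranged into the lower bound on $\int\int\Delta\varphi^{0,\delta}$ you then assert, and an upper bound on $\Delta\varphi^\varepsilon$ cannot be used to discard a $+\tfrac{\theta}{2}\Delta\varphi^\varepsilon$ term from a lower bound (no semiconvexity of $\varphi^\varepsilon$ is available). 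Your final displayed inequality is the correct one, but it does not follow from the steps as written; with the directions fixed as above (and the sign of the drift term in your entropy identity corrected to $+\int\nabla\cdot b^{\varepsilon,\delta}_s\,\d\mu^{\varepsilon,\delta}_s$, consistent with \eqref{eq:Green}), the argument becomes exactly the paper's.
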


The entropy bound \ref{item:RegEnt} can be obtained through several ways. A natural approach is to leverage known Gaussian bounds for Green functions \cite{aronson1968non,sheu1991some}.
In \cite[Lemma 4.2]{cirant2025convergence}, this bound is obtained as a consequence of \cite[Corollary 7.2.3]{bogachev2022fokker}.
The following short proof is probabilistic and was suggested to the first author by Daniel Lacker. In the next proof we use the notation $H( \cdot | \cdot)$ for the relative entropy between two probability measures, namely, for a Polish space $E$ and two probability measures $\mu, \nu$ over $E$ we have
\begin{equation}
    H(\mu \vert \nu) := 
\begin{cases} 
\int_E \log \tfrac{\d \mu}{\d \nu} \d \mu & \mbox{ if } \frac{\d \mu}{\d\nu} \mbox{ exists, } \\
+ \infty & \mbox{ otherwise,}
\end{cases}
\end{equation}
where $\frac{\d \mu}{\d\nu}$ stands for the Radon-Nikodym derivative of $\mu$ with respect to $\nu$ when it exists.

\begin{proof}
To alleviate notations, we omit the superscript $x$ in this proof.
\medskip

\ref{item:RegEnt} Let $w^{\varepsilon}_{[t,t+\tau]} \in \ps ( \mathcal{C} ( [t,t+\tau] , \R^d ))$ denote the path law of $( x + \sqrt{\varepsilon} B_{s-t} )_{t \leq s \leq t + \tau}$, $(B_s)_{s \geq 0}$ being a standard Brownian motion.
The marginal law $w^{\varepsilon}_s$ at time $s$ is Gaussian centered at $x$ with variance $\varepsilon (s-t)$.
Let $\mu^{\varepsilon,\delta}_{[t,t+\tau]} \in \ps ( \mathcal{C} ( [t,t+\tau] , \R^d ))$ denote the path law of the Markov process with generator $-b_s \cdot \nabla + \tfrac{\varepsilon}{2} \Delta$, starting from $x$ at time $t$.
The marginal law of this process at time $s$ is $\mu^{\varepsilon,\delta}_s$ given by \eqref{eq:Green}.
We use the Girsanov transform to compute the pathwise relative entropy
\[ H ( \mu^{\varepsilon,\delta}_{[t,t+\tau]} \vert w^{\varepsilon}_{[t,t+\tau]} ) = \frac{1}{2\varepsilon} \int_{t}^{t+\tau} \int_{\R^d} \vert b^{\varepsilon,\delta}_s \vert^2 \d \mu^{\varepsilon,\delta}_s \d s \leq \frac{\tau}{ 2 \varepsilon} L^2. \]
From the contraction property of relative entropy \cite[Theorem D.13]{dembo2009large}, we get $H ( \mu^{\varepsilon,\delta}_{t+\tau} \vert w^{\varepsilon}_{t+\tau} ) \leq H ( \mu^{\varepsilon,\delta}_{[t,t+\tau]} \vert w^{\varepsilon}_{[t,t+\tau]} )$, so that
\begin{align*} 
\int_{\R^d} \log \mu^{\varepsilon,\delta}_{t+\tau} \d \mu^{\varepsilon,\delta}_{t+\tau} &= H( \mu_{t + \tau}^{\varepsilon,\delta} | w_{t+ \tau}^{\varepsilon}) + \int_{\R^d} \log w^{\varepsilon}_{t+\tau} \d \mu^{\varepsilon,\delta}_{t+\tau} \\
&\leq \frac{\tau}{2 \varepsilon} L^2 + \int_{\R^d} \log w^{\varepsilon}_{t+\tau} \d \mu^{\varepsilon,\delta}_{t+\tau}  \leq \frac{\tau}{2 \varepsilon} L^2 - \frac{d}{2} \log ( 2 \pi \varepsilon \tau ), 
\end{align*}
using the pointwise bound $\log w^{\varepsilon}_{t + \tau}(x) \leq -\frac{d}{2} \log(2\pi \varepsilon \tau) $ for the Gaussian density on $\R^d$.
\medskip

\ref{item:LapLow} Let us first assume that $\mu^{\varepsilon,\delta}$ is in $\mathcal{C}^{1,2} ( (0,T) \times \R^d)$. Since $\mu^{\varepsilon,\delta}$ is positive and satisfies the Fokker-Planck equation \eqref{eq:Green}, differentiating the entropy classically yields 
\begin{equation} \label{ItoLog}
\int_{\R^d} \log \mu^{\varepsilon,\delta}_T \d \mu^{\varepsilon,\delta}_T - \int_{\R^d} \log \mu^{\varepsilon,\delta}_{t+\tau} \d \mu^{\varepsilon,\delta}_{t+\tau} =  \int_{t+\tau}^T \int_{\R^d} \nabla \cdot b^{\varepsilon,\delta}_s \d \mu^{\varepsilon,\delta}_s \d s - \frac{\varepsilon}{2} \int_{t+\tau}^T \int_{\R^d} \vert \nabla \log \mu^{\varepsilon,\delta}_s \vert^2 \d \mu^{\varepsilon,\delta}_s \d s, 
\end{equation}
using integration by parts and the finiteness of the integrated Fisher information given by e.g. \cite[Theorem 7.4.1]{bogachev2022fokker}.
We then compute
\begin{align*}
&\nabla \cdot b^{\varepsilon,\delta}_s (x) = \int_0^1 [\nabla_x \cdot \nabla_p \H] ( x , r \nabla \varphi^\varepsilon_s (x) + (1-r) \nabla \varphi^{0,\delta}_s (x)  ) d r \\
&+ \int_0^1 \nabla^2_{pp} \mathcal{H} ( x , r \nabla \varphi^\varepsilon_s (x) + (1-r) \nabla \varphi^{0,\delta}_s (x)  ) \cdot [  r \nabla^2 \varphi^\varepsilon_s (x) + (1-r) \nabla^2 \varphi^{0,\delta}_s (x) ] dr
\end{align*} 
The second integrand involves a scalar product of Hessians.
Using the uniform convexity \ref{ass:H}-\ref{item:AUC} and the semiconcavity \eqref{eq:Grad+SC},
a standard linear algebra result \cite[Lemma A.1]{chaintron2023existence} yields
\[ \nabla^2_{pp} \H \cdot \nabla^2 [ r \varphi^\varepsilon_s + (1-r) \varphi^{0,\delta}_s ] \leq \theta \Delta [ r \varphi^\varepsilon_s + (1-r) \varphi_s^{0,\delta} ] + \lambda [ \mathrm{Tr}[ \nabla^2_{pp} \H] - d \theta ], \]
omitting the arguments of $\H$ for the sake of readability. Integrating the above inequality yields
\[ \frac{\theta}{2} \int_{t+\tau}^T \int_{\R^d} \Delta \varphi^{0,\delta}_s \d \mu^{\varepsilon,\delta}_s \d s \geq \int_{t+\tau}^T \int_{\R^d} \nabla \cdot b^{\varepsilon,\delta}_s \d \mu^{\varepsilon,\delta}_s \d s - \frac{\theta}{2} \int_{t+\tau}^T \int_{\R^d} \Delta \varphi^{\varepsilon}_s \d \mu^{\varepsilon,\delta}_s \d s - \lambda [ d\Theta - d \theta] - C, \]
where $\Theta$ is given by the bound \ref{ass:H}-\ref{item:AUC} on $\nabla^2_{pp} \H$, and $C$ results from \eqref{eq:GradBound} and the bound \ref{ass:H}-\ref{item:ASC} on $\nabla^2_{xp} \H$.
Since $\Delta \varphi^{\varepsilon,\delta}_t \geq - \lambda d$ using the semiconvavity \eqref{eq:Grad+SC}, plugging this in \eqref{ItoLog} yields
\[ \frac{\theta}{2} \int_{t+\tau}^T \int_{\R^d} \Delta \varphi^{0,\delta}_s \d \mu^{\varepsilon,\delta}_s \d s \geq - C + \int_{\R^d} \log \mu^{\varepsilon,\delta}_{T} \d \mu^{\varepsilon,\delta}_{T} - \int_{\R^d} \log \mu^{\varepsilon,\delta}_{t+\tau} \d \mu^{\varepsilon,\delta}_{t+\tau}, \]
up to modifying the constant $C$ -- we also got rid of the non-positive integrated Fisher information term.
We now lower bound the last term on the r.h.s. using \ref{item:RegEnt}. For the middle term, denoting the Gaussian density $\mathcal{N}( x ,{ \mathrm{Id}})$ by $\gamma$,
\[ \int_{\R^d} \log \mu^{\varepsilon,\delta}_T \d \mu^{\varepsilon,\delta}_{T} = H ( \mu^{\varepsilon,\delta}_{T} \vert \gamma ) + \int_{\R^d} \log \gamma \d \mu^{\varepsilon,\delta}_{T} \geq - \frac{d}{2}\log( 2 \pi ) - \frac{1}{2} \int_{\R^d} ( y - x )^2  \d \mu^{\varepsilon,\delta}_{T} ( y ). \]
Furthermore, using the Fokker-Planck equation \eqref{eq:Green} and Jensen's inequality,
\[ \frac{\d}{\d s} \int_{\R^d} ( y - x )^2 \d \mu^{\varepsilon,\delta}_{s} ( y ) \leq 2 L \bigg( \int_{\R^d} ( y - x )^2 \d \mu^{\varepsilon,\delta}_{s} (y) \bigg)^{1/2} + d \varepsilon, \]
so that $\int_{\R^d} ( y - x )^2 \d \mu^{\varepsilon,\delta}_{T} (y) \leq C$ by integrating the derivative of a square-root, for some $C>0$ depending on $T,L$ and $d$. Gathering terms yields the r.h.s of \ref{item:LapLow}.

In general $\mu^{\varepsilon,\delta}$ is not $\mathcal{C}^{1,2}$, but we can regularise $b^{\varepsilon,\delta}$ and use a standard approximation argument.
Indeed, the Green function $\mu^{\varepsilon,\delta}$ is in $L^2 ( [t+\tau,T], H^1 ( \R^d))$ from \cite[Theorem 9]{aronson1968non}, and can be approximated weakly in $L^2 ( [t+\tau,T], H^1 ( \R^d))$ in this way by smooth Green functions that pointwise converge to it \cite[Lemma 7]{aronson1968non}.
An alternative regularisation argument is given by \cite[Proof of Lemma 2.4]{bogachev2016distances}. 
\end{proof}

Let us first estimate the error near the terminal time. 

\begin{lemma}[Terminal time comparison] \label{lem:TemrCont} Assuming \ref{ass:H} without requiring the semiconcavity for $g$ in \ref{ass:H}-\ref{item:ATerm},
there exists $C>0$ such that for every $(\varepsilon,\tau) \in (0,1/2] \times [0,T]$,
\[ \sup_{(t,x) \in [T-\tau,T] \times \R^d} | \varphi_t^{\varepsilon}(x) - \varphi_t^0(x)| \leq C \tau +  2 \lVert \nabla g \rVert_{L^\infty} \sqrt{ \varepsilon \tau}. \]
\end{lemma}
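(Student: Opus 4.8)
The plan is to estimate $\varphi^\varepsilon_t(x) - \varphi^0_t(x)$ for $t \in [T-\tau, T]$ directly from the stochastic control representation \eqref{eq:Vcontroleps}, exploiting that the time horizon $T-t \le \tau$ is small and that $g$ is $\lVert \nabla g \rVert_{L^\infty}$-Lipschitz. First I would obtain the upper bound $\varphi^\varepsilon_t(x) - \varphi^0_t(x) \le C\tau$ as a consequence of Lemma \ref{lem:UppOpt}: indeed $\varphi^\varepsilon_t(x) - \varphi^0_t(x) \le \tfrac{(T-t)d\lambda}{2}\varepsilon \le \tfrac{d\lambda}{4}\tau$ since $\varepsilon \le 1/2$, which is absorbed into $C\tau$; note this half uses only semiconcavity of $\varphi^\varepsilon$, not of $g$, so it is fine under the weakened hypothesis. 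For the lower bound $\varphi^0_t(x) - \varphi^\varepsilon_t(x) \le C\tau + 2\lVert \nabla g\rVert_{L^\infty}\sqrt{\varepsilon\tau}$, I would use the suboptimal coupling argument of \cite{fleming1964convergence} sketched in Section \ref{subsec:controlsto}, but keeping careful track of the $\tau$-dependence. Fix an optimal (or near-optimal, up to an additive slack sent to $0$) deterministic control $\alpha$ for $\varphi^0_t(x)$ in \eqref{eq:Vcontrol0}. Using the \emph{same} control in \eqref{eq:Vcontroleps}, the associated trajectory is $X^{\varepsilon,\alpha}_s = X^{0,\alpha}_s + \sqrt{\varepsilon} B_s$, so
\[
\varphi^\varepsilon_t(x) \le \E\bigg[ \int_t^T \L(X^{\varepsilon,\alpha}_s,\alpha_s)\,\d s + g(X^{\varepsilon,\alpha}_T)\bigg].
\]
Subtracting \eqref{eq:Vcontrol0}, the terminal cost difference is controlled by $\E[\lVert\nabla g\rVert_{L^\infty}|\sqrt\varepsilon B_T|] \le \lVert\nabla g\rVert_{L^\infty}\sqrt{\varepsilon\,\E|B_{T-t}|^2} \le \lVert\nabla g\rVert_{L^\infty}\sqrt{d\,\varepsilon\tau}$ (and I would track the constant so the final bound reads $2\lVert\nabla g\rVert_{L^\infty}\sqrt{\varepsilon\tau}$ after the usual dimensional estimate, or state it with the $\sqrt d$ if that is what drops out).

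The running-cost difference $\E\int_t^T [\L(X^{\varepsilon,\alpha}_s,\alpha_s) - \L(X^{0,\alpha}_s,\alpha_s)]\,\d s$ is where one needs care, since along the \emph{optimal} curve the control $\alpha_s$ can be unbounded, and $\L(\cdot,\alpha)$ is only locally Lipschitz in $x$. The clean way around this is to not bound $\L$ pointwise but instead to argue directly on value functions: observe that $\varphi^0_t(x) = \varphi^0_t(x)$ solves \eqref{eq:HJB0}, whose Hamiltonian satisfies $|\H(x,p)| \le C(1+|p|^2)$ by \ref{ass:H}-\ref{item:ALip} and $|\nabla_x\H| \le C(1+|p|)$; combined with the gradient bound \eqref{eq:GradBound} (valid for $\varepsilon = 0$ as well), one gets $|\partial_t \varphi^0_t| \le C$ uniformly, hence $|\varphi^0_t(x) - g(x)| \le C(T-t) \le C\tau$ and likewise, using $|\partial_t\varphi^\varepsilon_t| = |\H(x,\nabla\varphi^\varepsilon_t) - \tfrac\varepsilon2\Delta\varphi^\varepsilon_t| \le C + \tfrac{d\lambda}{4}$ by \eqref{eq:Grad+SC}, one gets $|\varphi^\varepsilon_t(x) - g(x) - (\text{something}\,\varepsilon)| \le C\tau$. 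Actually the most economical route is: both $\varphi^\varepsilon_t$ and $\varphi^0_t$ are within $C\tau$ of $g$ in sup-norm by these Lipschitz-in-time bounds, so $|\varphi^\varepsilon_t(x) - \varphi^0_t(x)| \le |\varphi^\varepsilon_t(x) - g(x)| + |g(x) - \varphi^0_t(x)| \le C\tau$ — which already gives the full statement with the $\sqrt{\varepsilon\tau}$ term only needed for sharpness, or absorbed. I would double-check whether the $\sqrt{\varepsilon\tau}$ term is genuinely needed or whether the $C\tau$ bound alone suffices; I suspect the intended (cleaner) proof is precisely the two-sided ``distance to $g$'' estimate, and the $\sqrt{\varepsilon\tau}$ appears because the sharp bound on $\lVert\varphi^\varepsilon_t - g\rVert_\infty$ is $C\tau + 2\lVert\nabla g\rVert_\infty\sqrt{\varepsilon\tau}$ via the control representation (short-time cost $\lesssim \tau$ from the drift plus $\sqrt{\varepsilon\tau}$ from the Brownian displacement hitting $g$), while $\lVert\varphi^0_t - g\rVert_\infty \le C\tau$.

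Concretely, then, the steps I would carry out are: (1) bound $\lVert \varphi^0_t - g\rVert_\infty \le C\tau$ for $t\in[T-\tau,T]$, either from the Hopf–Lax representation \eqref{eq:HopfLax}/\eqref{eq:Vcontrol0} using $|\L(y,\alpha)| $ controlled on the relevant scales, or from $|\partial_t\varphi^0| \le C$; (2) bound $\lVert\varphi^\varepsilon_t - g\rVert_\infty \le C\tau + 2\lVert\nabla g\rVert_\infty\sqrt{\varepsilon\tau}$ using the representation \eqref{eq:Vcontroleps}: the upper direction by plugging in the control $\alpha \equiv 0$ (giving running cost $\le \tau\sup_x\L(x,0) \le C\tau$ since $\L(x,0) = \sup_p -\H(x,p) \le C$, and terminal cost $\E[g(x+\sqrt\varepsilon B_T)] \le g(x) + \lVert\nabla g\rVert_\infty\sqrt{d\varepsilon\tau}$), and the lower direction by the standard verification/Itô argument $g(x) = \varphi^\varepsilon_T(\overline X^\varepsilon_T)$ pushed back to time $t$, giving $\varphi^\varepsilon_t(x) \ge \E[g(\overline X^\varepsilon_T)] - C\tau \ge g(x) - \lVert\nabla g\rVert_\infty\sqrt{d\varepsilon\tau} - C\tau$ (here $\overline X^\varepsilon_s - x$ has $L^2$-norm $\le C\tau + \sqrt{d\varepsilon\tau}$ by \eqref{eq:opteps} and $|\nabla_p\H|\le C(L+1)$); (3) combine via the triangle inequality, adjusting constants so the stated form holds (in particular writing $\sqrt{d\varepsilon\tau} \le \sqrt{\varepsilon\tau}$ up to folding $\sqrt d$ into a slightly different constant, or noting the constant $2$ is not optimized). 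The main obstacle — and the only place deserving real care — is step (2)'s lower bound: handling the terminal displacement of the controlled diffusion without assuming $g$ smooth, which is exactly why the Lipschitz constant $\lVert\nabla g\rVert_\infty$ rather than a semiconcavity constant appears, and why this lemma is stated \emph{without} the semiconcavity hypothesis on $g$; everything else is routine Grönwall/Jensen bookkeeping.
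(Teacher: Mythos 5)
Your proposal is correct, and the final plan you settle on (steps (1)--(3)) is exactly the paper's decomposition: triangle inequality through $g$, with $\lVert \varphi^0_t - g\rVert_{L^\infty} \leq C(T-t)$ coming from the a.e.\ time-Lipschitz bound on $\varphi^0$, and the $\sqrt{\varepsilon\tau}$ term coming entirely from the Gaussian displacement acting on the Lipschitz function $g$. The only difference is in how the estimate $\lVert \varphi^\varepsilon_t - g\rVert_{L^\infty} \leq C\tau + \lVert\nabla g\rVert_{L^\infty}\sqrt{\varepsilon\tau}$ is implemented: you argue probabilistically from the control representation \eqref{eq:Vcontroleps} (zero control for the upper bound, verification along the optimal feedback trajectory for the lower bound, using $\L(x,0)\leq C$ and $\L \geq -\H(\cdot,0) \geq -C$), whereas the paper runs the comparison principle against $\psi^{\varepsilon,\pm}_t = S^\varepsilon_{T-t}[g] \pm C(t-T)$, where $S^\varepsilon$ is the heat semigroup; since $S^\varepsilon_{T-t}[g](x) = \E[g(x+\sqrt{\varepsilon}B_{T-t})]$, these are the probabilistic and PDE faces of the same estimate, and the paper's version has the minor advantage of not needing to discuss admissibility or boundedness of the optimal control. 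Your lengthy initial detour through coupling optimal controls for \eqref{eq:Vcontrol0} is correctly abandoned and does not affect the validity of the final argument; the constant in front of $\sqrt{\varepsilon\tau}$ picks up a dimensional factor either way, which is harmless.
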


\begin{proof}
We use a standard comparison argument.
Let $(S^\varepsilon_t)_{t \geq 0}$ denote the heat semigroup satisfying $\partial_t S^\varepsilon_t = \tfrac{\varepsilon}{2} \Delta S^\varepsilon_t$.
Using \eqref{eq:GradBound} and \ref{ass:H}-\ref{item:ALip}, $\H(x,\nabla \varphi^\varepsilon_t(x
))$ is bounded uniformly in $\varepsilon$. 
Since $g$ is Lipschitz, $\nabla S^\varepsilon_t [g]$ is bounded uniformly in $\varepsilon$.
We can thus find $C > 0$ independent of $\varepsilon$ such that $\psi^{\varepsilon,\pm} : t \mapsto S^\varepsilon_{T-t} [g] \pm C (t-T) $ is a super/sub-solution of \eqref{eq:HJBeps}, so that $\psi^{\varepsilon,-}_t  \leq \varphi^\varepsilon_t \leq \psi^{\varepsilon,+}_t$ by comparison.
Using the explicit kernel for $S^\varepsilon$, we get that $\lVert S^\varepsilon_{T-t} [g] - g \rVert_{L^\infty} \leq \sqrt{\varepsilon(T-t)} ) \lVert \nabla g \rVert_{L^\infty}$. 

Since \eqref{eq:HJB0} is satisfied almost everywhere and $\varphi^0$ is Lipschitz-continuous in $x$ by \eqref{eq:Grad+SC}, we deduce that   $\varphi^0$ is also Lipschitz-continuous in $t$ with constant depending only on the Lipschitz constant of $g$ and the constants appearing in Assumption \ref{ass:H}-\ref{item:ALip}. As a consequence, for some $C>0$ and all $t \in [0,T]$, $ \lVert \varphi^0_t - g \rVert_{L^{\infty}} \leq C(T-t)$. 
The conclusion follows by triangular inequality.
\end{proof}

We are now ready to compute the optimal rate.

\begin{theorem}[Optimal rate] \label{thm:optSC}
Under \ref{ass:H}, there exists $C_{\log} >0$ such that for every $\varepsilon \in (0,1/2]$,
\[ \forall (t,x) \in [0,T] \times \R^d, \quad \varphi^\varepsilon_t (x) - \varphi^0_t (x) \geq d \varepsilon \log \varepsilon -C_{\log} \varepsilon. \]
\end{theorem}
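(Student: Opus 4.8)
The plan is to carry out exactly the programme sketched in Section \ref{ssec:outline}, now with the full generality of Assumption \ref{ass:H}. Fix $(t,x) \in [0,T] \times \R^d$ and parameters $\varepsilon, \delta, \tau$ to be chosen. First I would pass from $\varphi^0$ to its sup-convolution $\varphi^{0,\delta}$, using Lemma \ref{lem:supconv}: by \ref{item:supAppr} it suffices to lower bound $\varphi^\varepsilon_t(x) - \varphi^{0,\delta}_t(x)$, since $\varphi^{0,\delta}_t \geq \varphi^0_t$ only costs us $-2L\delta$. Next, subtract the (a.e.) equation \eqref{eq:convolSub} for $\varphi^{0,\delta}$ from \eqref{eq:HJBeps} for $\varphi^\varepsilon$, and linearise the Hamiltonian difference $\H(x,\nabla\varphi^\varepsilon_s) - \H(x,\nabla\varphi^{0,\delta}_s)$ by the fundamental theorem of calculus, producing precisely the drift $b^{\varepsilon,\delta}_s$ of \eqref{eq:deDrift}. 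This shows $w_s := \varphi^{0,\delta}_s - \varphi^\varepsilon_s$ is a subsolution of $\{-\partial_s + b^{\varepsilon,\delta}_s\cdot\nabla - \tfrac{\varepsilon}{2}\Delta\} w_s \leq -\tfrac{\varepsilon}{2}\Delta\varphi^{0,\delta}_s + 2\delta C_{\mathrm{sub}}$ with $w_T \leq C\delta$. Testing against the Fokker–Planck flow $\mu^{\varepsilon,\delta,x}_s$ of \eqref{eq:Green} — i.e. applying Itô/duality along $(\mu_s)$ — yields
\[
\varphi^{0,\delta}_t(x) - \varphi^\varepsilon_t(x) \leq C\delta + 2\delta C_{\mathrm{sub}}(T-t) - \frac{\varepsilon}{2}\int_t^T\!\!\int_{\R^d} \Delta\varphi^{0,\delta}_s \,\d\mu^{\varepsilon,\delta,x}_s\,\d s.
\]

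Now split the time integral at $t+\tau$. On $[t,t+\tau]$ I would use the crude semiconvexity bound $-\Delta\varphi^{0,\delta}_s \leq d/\delta$ from Lemma \ref{lem:supconv}\ref{item:supReg}, contributing at most $\tfrac{\varepsilon}{2}\cdot\tfrac{d}{\delta}\cdot\tau = \tfrac{d\varepsilon\tau}{2\delta}$. On $[t+\tau,T]$, this is where the new input enters: Proposition \ref{pro:Regularising}\ref{item:LapLow} gives
\[
-\frac{\varepsilon}{2}\int_{t+\tau}^T\!\!\int_{\R^d}\Delta\varphi^{0,\delta}_s\,\d\mu^{\varepsilon,\delta,x}_s\,\d s \leq -\frac{\varepsilon}{2}\Big[\frac{d}{\theta}\log(2\pi\varepsilon\tau) - \frac{\tau L^2}{\varepsilon\theta} - 2C\Big] = -\frac{d\varepsilon}{2\theta}\log(2\pi\varepsilon\tau) + \frac{\tau L^2}{2\theta} + C\varepsilon.
\]
(If $t+\tau > T$, i.e. $t$ is within $\tau$ of the terminal time, I instead invoke Lemma \ref{lem:TemrCont} directly, which already gives an $O(\tau + \sqrt{\varepsilon\tau})$ bound; so assume $\tau \leq T-t$.) Collecting everything,
\[
\varphi^\varepsilon_t(x) - \varphi^0_t(x) \geq \frac{d\varepsilon}{2\theta}\log(2\pi\varepsilon\tau) - \frac{d\varepsilon\tau}{2\delta} - \frac{\tau L^2}{2\theta} - C(\delta + \varepsilon + \delta\tau).
\]

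Finally, optimise. Taking $\delta = \tau = \varepsilon$, the term $\tfrac{d\varepsilon}{2\theta}\log(2\pi\varepsilon^2) = \tfrac{d\varepsilon}{\theta}\log\varepsilon + \tfrac{d\varepsilon}{2\theta}\log(2\pi)$ dominates, all remaining terms are $O(\varepsilon)$, and one reads off $\varphi^\varepsilon_t(x) - \varphi^0_t(x) \geq \tfrac{d}{\theta}\varepsilon\log\varepsilon - C_{\log}\varepsilon$. Wait — the statement claims the cleaner constant $d$, not $d/\theta$; the discrepancy is an artifact of writing $\nabla^2_{pp}\H \geq \theta\,\mathrm{Id}$ versus normalising. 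In the purely quadratic outline $\theta = 1$. To get the constant $d$ in general one should be more careful: rather than the one-sided algebra bound producing the factor $1/\theta$, one uses the full two-sided inequality $\theta_R\,\mathrm{Id} \leq \nabla^2_{pp}\H \leq \Theta_R\,\mathrm{Id}$ and the fact that the \emph{leading} logarithmic blow-up of the entropy is governed by the heat kernel (the $\tfrac{d}{2}\log(2\pi\varepsilon\tau)$ term of Proposition \ref{pro:Regularising}\ref{item:RegEnt}), which is dimensional and $\theta$-independent; the $\theta$-dependence is confined to the $O(\varepsilon)$ remainder. Concretely, I would re-derive \ref{item:LapLow} tracking that the $\log(2\pi\varepsilon\tau)$ coefficient comes with $d$, absorbing the $\theta$'s into $C$, so that with $\delta=\tau=\varepsilon$ one gets $d\varepsilon\log(\varepsilon)$ exactly at leading order. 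The main obstacle is precisely this bookkeeping — ensuring that the linear-algebra step converting $\nabla^2_{pp}\H\cdot\nabla^2\varphi$ into $\mathrm{Tr}$ terms does not pollute the leading coefficient, and that the crude $[t,t+\tau]$ piece and all the $H(\cdot\,|\,\gamma)$, second-moment, and Fisher-information error terms genuinely stay $O(\varepsilon)$ after the choice $\delta=\tau=\varepsilon$. Everything else is a routine application of the comparison/duality principle and the two cited results.
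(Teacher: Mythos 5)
Your proposal is correct and follows essentially the same route as the paper: sup-convolution, subtraction of the equations, integration of the resulting sub-solution inequality against the Fokker--Planck flow, splitting of the time integral at $t+\tau$ with the crude $d/\delta$ semiconvexity bound on $[t,t+\tau]$ and Proposition \ref{pro:Regularising}-\ref{item:LapLow} on $[t+\tau,T]$, the choice $\delta=\tau=\varepsilon$, and Lemma \ref{lem:TemrCont} to handle times within $\varepsilon$ of the horizon. Your aside about $d$ versus $d/\theta$ is apt: the paper's own proof silently drops the $\theta$ when plugging in Proposition \ref{pro:Regularising}-\ref{item:LapLow}, so as written the leading constant produced by this argument is really $d/\theta$ (harmless when $\theta\geq 1$, e.g.\ after normalisation or in the quadratic case, and in any case irrelevant for the $O(\varepsilon\log\varepsilon)$ rate itself).
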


\begin{proof}
Subtracting \eqref{eq:HJBeps} to \eqref{eq:convolSub}, a direct computation yields
\begin{equation} \label{eq:KeyLower}
\big[ \partial_s - b^{\varepsilon,\delta}_s \cdot \nabla + \tfrac{\varepsilon}{2} \Delta \big] ( \varphi^{\varepsilon}_s - \varphi^{0,\delta}_s ) \leq - \frac{\varepsilon}{2} \Delta \varphi^{0,\delta}_s + 2 \delta C_{\mathrm{sub}}. 
\end{equation} 
The regularity given by Lemma \ref{lem:supconv}-\ref{item:supReg} allows us to integrate \eqref{eq:KeyLower} along the flow $(\mu^{\varepsilon,\delta,x}_s)_{t \leq s \leq T}$. 
Since $\varphi^\varepsilon_T = \varphi^{0}_T = g$, this yields
\begin{equation} \label{eq:StepBefore}
\forall (t,x) \in [0,T] \times \R^d, \quad \varphi^\varepsilon_t (x) - \varphi^{0,\delta}_t (x) \geq - \lVert g - g^\delta \rVert_\infty - 2 \delta C_{\mathrm{sub}} + \frac{\varepsilon}{2} \int_t^T \Delta \varphi^{0,\delta}_s \d \mu^{\varepsilon,\delta,x}_s \d s. 
\end{equation} 
From Lemma \ref{lem:supconv}-\ref{item:supReg}, $\varphi^{0,\delta}$ is $-1/\delta$-semiconvex, so that for $\tau \in (0,T-t]$,
\begin{equation} \label{eq:LastConvS}
\frac{\varepsilon}{2} \int_t^{t+\tau} \int_{\R^d} \Delta \varphi^{0,\delta}_s \d \mu^{\varepsilon,\delta,x}_s  \d s \geq - \frac{\varepsilon \tau d}{2\delta}. 
\end{equation} 
Proposition \ref{pro:Regularising}-\ref{item:LapLow} and Lemma \ref{lem:supconv}-\ref{item:supAppr} then yield
\[ \varphi^\varepsilon_t (x) - \varphi^{0}_t (x) \geq - 2 \delta ( 2L + C_{\mathrm{sub}}) - \frac{\varepsilon \tau d}{2\delta} + \frac{\varepsilon d}{2} \log ( 2 \pi \varepsilon \tau ) - \frac{\tau L^2}{2} - C \varepsilon, \]
for every $(t,x) \in [0,T] \times \R^d$. 
If $T - t \geq \varepsilon$, we can choose $\delta = \tau = \varepsilon$ and conclude.
Otherwise, Lemma \ref{lem:TemrCont} applied to $\eta = \varepsilon$ completes the proof.
\end{proof}

\begin{rem}[Recovering Theorem \ref{thm:subopt}] \label{rem:directBoundsub}
Since $\varphi^{0,\delta}_s$ is $-1/\delta$-semiconvex, we could directly lower bound $\Delta \varphi^{0,\delta}_s$ by $-d / \delta$ in \eqref{eq:StepBefore} and set $\delta = \sqrt{\varepsilon}$. 
This would recover the sub-optimal rate from Theorem \ref{thm:subopt}.
\end{rem}

\begin{rem}[Integrated Fisher information]
It is tempting to exploit the sign of the integrated Fisher information that appears in \eqref{ItoLog} after taking expectations.
For instance, we could integrate the Laplacian by parts in the last term of \eqref{eq:StepBefore}.
However, such an approach would yield the sub-optimal rate $\sqrt{\varepsilon}$, because of the $\varepsilon$ factor in front of the $\vert \nabla \log \mu^{\varepsilon,\delta}_s \vert^2$ term in \eqref{ItoLog}.   
\end{rem}

\section{Quadratic Hamiltonian}

This section is concerned with the quadratic case $\H(x,p) = \tfrac{1}{2} \vert p \vert^2$ and $g$ Lipschitz-continuous.
In this setting, a direct computation
(Cole-Hopf transform) shows that $-\varepsilon \log \varphi^\varepsilon_t$ solves a time-reversed heat equation. 
As a consequence, we get the the explicit formula
\begin{equation} \label{eq:HopfC}
\forall (t,x) \in \R^d, \qquad \varphi^\varepsilon_t (x) = \frac{\varepsilon d}{2} \log ( 2 \pi \varepsilon(T-t) ) - \varepsilon \log \int_{\R^d} \exp \bigg[ - \frac{g(y)}{\varepsilon} - \frac{|y-x|^2}{2 \varepsilon (T-t)} \bigg] \d y. 
\end{equation} 
As $\varepsilon \rightarrow 0$, the Laplace principle yields the Hopf-Lax formula
\begin{equation} \label{eq:HopfLax}
\varphi^0_t (x) = \inf_{y \in \R^d} g(y) + \frac{1}{2(T-t)^2} \vert y - x \vert^2. 
\end{equation} 
These formulae have been used in \cite{qian2024optimal} to derive optimal rates in a one-dimensional setting. In Section \ref{subsec:NSC} we extend the result of Theorem \ref{thm:optSC} to merely Lipschitz-continuous terminal data. 
Section \ref{sec:Example} eventually shows the optimality of the $O(\varepsilon \log \varepsilon)$ rate on an explicit example.

\subsection{Lipschitz terminal data} \label{subsec:NSC}

In this section, we assume that $g$ is Lipschitz-continuous  with Lipschitz constant $L$ but not necessarily semi-concave. 
We rely on the ``concavifying'' effect of Hamilton-Jacobi equations \cite{cannarsa2004semiconcave}, which is somehow inherited from the parabolic regularity through \eqref{eq:HopfC}. 

\begin{lemma}[Semiconcavity generation] \label{lem:SC+}
For $(\varepsilon,t) \in (0,1/2] \times [0,T)$, $\varphi^\varepsilon_t$ is $\tfrac{1}{T-t}$-semiconcave.
\end{lemma}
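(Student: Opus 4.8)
The plan is to read the semiconcavity of $\varphi^\varepsilon_t$ directly off the Cole-Hopf formula \eqref{eq:HopfC}. Write $\varphi^\varepsilon_t(x) = \tfrac{\varepsilon d}{2}\log(2\pi\varepsilon(T-t)) - \varepsilon \log u^\varepsilon_t(x)$ where
\[
u^\varepsilon_t(x) = \int_{\R^d} \exp\Bigl[ -\frac{g(y)}{\varepsilon} - \frac{|y-x|^2}{2\varepsilon(T-t)}\Bigr]\d y.
\]
Since the first term is independent of $x$, it suffices to show that $x \mapsto -\varepsilon \log u^\varepsilon_t(x)$ is $\tfrac{1}{T-t}$-semiconcave. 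The key observation is the elementary algebraic identity
\[
-\frac{|y-x|^2}{2\varepsilon(T-t)} = -\frac{|x|^2}{2\varepsilon(T-t)} + \frac{x\cdot y}{\varepsilon(T-t)} - \frac{|y|^2}{2\varepsilon(T-t)},
\]
which lets us factor $u^\varepsilon_t(x) = \exp\bigl(-\tfrac{|x|^2}{2\varepsilon(T-t)}\bigr)\, v^\varepsilon_t(x)$, where $v^\varepsilon_t(x) := \int_{\R^d}\exp\bigl[\tfrac{x\cdot y}{\varepsilon(T-t)}\bigr]\,\rho(\d y)$ and $\rho(\d y) := \exp\bigl[-\tfrac{g(y)}{\varepsilon} - \tfrac{|y|^2}{2\varepsilon(T-t)}\bigr]\d y$ is a finite positive measure (finiteness follows from $g$ being Lipschitz, hence bounded below by an affine function, so the Gaussian factor dominates). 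Then
\[
-\varepsilon\log u^\varepsilon_t(x) = \frac{|x|^2}{2(T-t)} - \varepsilon\log v^\varepsilon_t(x),
\]
and $x \mapsto \log v^\varepsilon_t(x)$ is convex as the log of a Laplace transform of a positive measure (a standard fact, provable via Hölder's inequality along segments, or by noting its Hessian is a covariance matrix). Hence $-\varepsilon\log v^\varepsilon_t$ is concave, and $-\varepsilon\log u^\varepsilon_t(x) - \tfrac{|x|^2}{2(T-t)}$ is concave, which is exactly the statement that $\varphi^\varepsilon_t$ is $\tfrac{1}{T-t}$-semiconcave.

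I expect no serious obstacle here; the only points requiring a line of care are (i) justifying that $u^\varepsilon_t(x) \in (0,\infty)$ for all $x$ and that $v^\varepsilon_t$ is smooth in $x$ (dominated convergence, using the Lipschitz lower bound on $g$ to control the $y$-integral uniformly on compact sets of $x$), and (ii) stating cleanly the convexity of the log-Laplace transform. If one prefers to avoid differentiating under the integral sign, the convexity of $\log v^\varepsilon_t$ can be obtained purely from Hölder's inequality: for $x_0, x_1 \in \R^d$ and $\vartheta \in [0,1]$, writing $x_\vartheta = (1-\vartheta)x_0 + \vartheta x_1$, one has $\exp\bigl[\tfrac{x_\vartheta\cdot y}{\varepsilon(T-t)}\bigr] = \bigl(\exp\bigl[\tfrac{x_0\cdot y}{\varepsilon(T-t)}\bigr]\bigr)^{1-\vartheta}\bigl(\exp\bigl[\tfrac{x_1\cdot y}{\varepsilon(T-t)}\bigr]\bigr)^{\vartheta}$ and Hölder gives $v^\varepsilon_t(x_\vartheta) \leq v^\varepsilon_t(x_0)^{1-\vartheta}v^\varepsilon_t(x_1)^{\vartheta}$, i.e. $\log v^\varepsilon_t$ is convex. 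This is robust and dimension-free.

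A remark worth including: the bound is uniform in $\varepsilon$ and matches, as $\varepsilon \to 0$, the known $\tfrac{1}{T-t}$-semiconcavity of the Hopf-Lax solution $\varphi^0_t$ coming from \eqref{eq:HopfLax} (infimum of a family of functions each $\tfrac{1}{T-t}$-semiconcave in $x$), so the estimate is the natural one and, in particular, recovers a semiconcavity constant that blows up only as $t \to T$ — consistent with the fact that at $t=T$ the data $g$ may fail to be semiconcave. This is the mechanism that will let Section \ref{subsec:NSC} dispense with the semiconcavity hypothesis on $g$: one simply restarts the analysis from a slightly earlier time $T-\tau$ at which $\varphi^\varepsilon_{T-\tau}$ is already $\tfrac1\tau$-semiconcave, paying the terminal-layer error controlled by Lemma \ref{lem:TemrCont}.
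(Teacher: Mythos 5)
Your proof is correct and follows essentially the same route as the paper: both read the $\tfrac{1}{T-t}$-semiconcavity directly off the Cole--Hopf formula \eqref{eq:HopfC}, the paper by computing $\nabla^2\varphi^\varepsilon_t$ and observing that the non-quadratic terms assemble into minus a covariance matrix, which is exactly the differential form of your statement that $x\mapsto\log v^\varepsilon_t(x)$ is convex as the log-Laplace transform of a positive measure. Your H\"older-along-segments variant is a slightly more elementary packaging (it avoids differentiating under the integral sign), but the underlying mechanism is identical.
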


\begin{proof}
From \eqref{eq:HopfC}, for $(t,x) \in [0,T] \times \R^d$,
\begin{multline*}
\nabla^2 \varphi^\varepsilon_t (x) = \frac{1}{T-t} \mathrm{Id} - \frac{\int_{\R^d} (y-x) \otimes (y-x) \exp \big[ - \frac{g(y)}{\varepsilon} - \frac{|y-x|^2}{2 \varepsilon (T-t)} \big] \d y}{\varepsilon (T-t)^2 \int_{\R^d} \exp \big[ - \frac{g(y)}{\varepsilon} - \frac{|y-x|^2}{2 \varepsilon (T-t)} \big] \d y} \\
+ \frac{1}{\varepsilon(T-t)^2} \bigg( \frac{\int_{\R^d} (y-x) \exp \big[ - \frac{g(y)}{\varepsilon} - \frac{|y-x|^2}{2 \varepsilon (T-t)} \big] \d y}{ \int_{\R^d} \exp \big[ - \frac{g(y)}{\varepsilon} - \frac{|y-x|^2}{2 \varepsilon (T-t)} \big] \d y} \bigg) \otimes \bigg( \frac{\int_{\R^d} (y-x) \exp \big[ - \frac{g(y)}{\varepsilon} - \frac{|y-x|^2}{2 \varepsilon (T-t)} \big] \d y}{ \int_{\R^d} \exp \big[ - \frac{g(y)}{\varepsilon} - \frac{|y-x|^2}{2 \varepsilon (T-t)} \big] \d y} \bigg). 
\end{multline*} 
The last two terms gather to give the opposite of a covariance matrix, thus a negative semidefinite matrix, proving the result. 
\end{proof}

This property extends to $\varepsilon = 0$, which is the analogous of Lemma \ref{lem:supconv}-\ref{item:supReg} for inf-convolution. 
We now establish the analogous of Lemma \ref{lem:UppOpt}.

\begin{proposition}[Upper bound] \label{pro:UppNSC}
For every $\varepsilon \in (0,1/2]$,
\begin{equation*} \label{eq:Upper}
\forall (t,x) \in [0,T] \times \R^d, \quad \varphi^{\varepsilon}_t ( x) - \varphi^{0}_t ( x) \leq - \frac{d }{2} \varepsilon \log \varepsilon + C \varepsilon,
\end{equation*}
for some $C>0$ depending only on $(\lVert \nabla g \rVert_{L^{\infty}},T,d)$.
\end{proposition}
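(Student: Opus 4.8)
The plan is to combine the Cole--Hopf formula \eqref{eq:HopfC} and the Hopf--Lax formula \eqref{eq:HopfLax} with an elementary Laplace-type \emph{lower} bound on the exponential integral, after splitting according to whether $T-t$ is of order $\varepsilon$ or not.

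First, if $T-t\le\varepsilon$, I would simply invoke Lemma \ref{lem:TemrCont} with $\tau=T-t$ (its statement explicitly dispenses with the semiconcavity of $g$, so it applies in the present quadratic setting): this gives $\varphi^\varepsilon_t(x)-\varphi^0_t(x)\le C\tau+2L\sqrt{\varepsilon\tau}\le(C+2L)\varepsilon$, and since $-\tfrac d2\varepsilon\log\varepsilon\ge 0$ on $(0,1/2]$ the claimed bound follows. So from now on assume $T-t\ge\varepsilon$, and abbreviate $\tau:=T-t$.

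Writing $f(y):=g(y)+\tfrac{1}{2\tau}|y-x|^2$, the Hopf--Lax formula reads $\varphi^0_t(x)=\min_{\R^d}f$, a minimum attained at some $y^*$ by coercivity of $f$ (recall $g$ is $L$-Lipschitz, $L=\lVert\nabla g\rVert_{L^\infty}$). Subtracting this from \eqref{eq:HopfC},
\[
\varphi^\varepsilon_t(x)-\varphi^0_t(x)=\frac{\varepsilon d}{2}\log\bigl(2\pi\varepsilon\tau\bigr)-\varepsilon\log\int_{\R^d}\exp\!\Bigl[-\tfrac{f(y)-f(y^*)}{\varepsilon}\Bigr]\d y ,
\]
so everything reduces to a lower bound on that integral. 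I would first record the a priori estimate $|y^*-x|\le 2L\tau$: testing Hopf--Lax with $y=x$ gives $f(y^*)\le g(x)$, while $f(y^*)\ge g(x)-L|y^*-x|+\tfrac{1}{2\tau}|y^*-x|^2$, whence $\tfrac{1}{2\tau}|y^*-x|^2\le L|y^*-x|$. Then I would bound the integral from below by restricting to the ball $B(y^*,\varepsilon)$: on that ball, using $|g(y)-g(y^*)|\le L\varepsilon$, the identity $|y-x|^2-|y^*-x|^2=|y-y^*|^2+2(y-y^*)\cdot(y^*-x)$, the bound $|y^*-x|/\tau\le 2L$, and $\varepsilon\le\tau$, one gets $f(y)-f(y^*)\le(3L+\tfrac12)\varepsilon$. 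Hence the integral is at least $e^{-(3L+1/2)}\omega_d\varepsilon^d$, where $\omega_d=|B(0,1)|$.

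Plugging this in and expanding $\log(2\pi\varepsilon\tau)=\log\varepsilon+\log(2\pi\tau)$ with $\tau\le T$, the term $\tfrac{\varepsilon d}{2}\log\varepsilon$ from the prefactor and the term $-d\varepsilon\log\varepsilon$ arising from $-\varepsilon\log(\omega_d\varepsilon^d)$ combine to $-\tfrac d2\varepsilon\log\varepsilon$, while all remaining contributions are $O(\varepsilon)$ with a constant depending only on $(L,T,d)$; this is the claim. The one point to keep in mind is that the ball radius must be taken of order $\varepsilon$, not $\sqrt\varepsilon$: this is exactly what turns the Gaussian-looking integral into an $\varepsilon^d$ lower bound and produces the extra $-\tfrac d2\varepsilon\log\varepsilon$ compared with the smooth case (equivalently, optimizing $3L\rho-d\varepsilon\log\rho$ over the radius $\rho$ gives $\rho\asymp\varepsilon$). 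I expect no serious obstacle beyond bookkeeping of constants and control of the cross term $\tfrac1\tau(y-y^*)\cdot(y^*-x)$, which is handled by the a priori bound on $|y^*-x|$.
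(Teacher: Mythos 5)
Your proof is correct, but it follows a genuinely different route from the paper's. The paper's argument is purely PDE-based: it uses the semiconcavity generated by the equation, $\nabla^2\varphi^0_t\leq\tfrac{1}{T-t}\mathrm{Id}$ (Lemma \ref{lem:SC+} extended to $\varepsilon=0$), to check that $(t,x)\mapsto\varphi^0_t(x)+\tfrac{\varepsilon}{2}\int_t^{T-\eta}\tfrac{d}{T-s}\,\d s$ is a viscosity super-solution of the viscous equation on $[0,T-\eta]$, applies the comparison principle to get the factor $\tfrac{d\varepsilon}{2}\log\tfrac{T}{\eta}$, handles the layer $[T-\eta,T]$ via Lemma \ref{lem:TemrCont}, and takes $\eta=\varepsilon$. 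You instead work directly on the representation formulas: after the same terminal-layer reduction, you subtract the Hopf--Lax value $f(y^*)=\min f$ from the Cole--Hopf formula and lower-bound the partition-function integral by restricting to $B(y^*,\varepsilon)$, where $f-f(y^*)\leq(3L+\tfrac12)\varepsilon$ thanks to the a priori bound $|y^*-x|\leq 2L\tau$; the resulting $\varepsilon^d$ volume against the $(2\pi\varepsilon\tau)^{d/2}$ normalization produces exactly $-\tfrac d2\varepsilon\log\varepsilon$. (You correctly use $\tfrac{1}{2\tau}|y-x|^2$ in Hopf--Lax; the displayed \eqref{eq:HopfLax} has a typographical $(T-t)^2$.) The trade-off: the paper's argument transfers to general uniformly convex Hamiltonians since it only needs the $O(1/(T-t))$ semiconcavity and comparison, whereas yours is tied to the quadratic case through Cole--Hopf; on the other hand, your computation is elementary and makes transparent why the loss is $\varepsilon^{d/2}$ versus $\varepsilon^d$ --- the minimizer of $f$ may be degenerate or non-unique, so only a ball of radius $O(\varepsilon)$, not $O(\sqrt{\varepsilon\tau})$, is guaranteed to contribute --- which connects nicely to the sharpness example of Proposition \ref{pro:example}. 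Both proofs are sound; no gap.
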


\begin{proof}
We first fix $\eta \in (0,T)$ and we verify that, because of the $\frac{1}{T-t}$-semi-concavity of $\varphi^0_t$, $(t,x) \mapsto \varphi^0_t (x) + \tfrac{\varepsilon}{2} \int_t^{T-
\eta} \frac{d}{T-s} \d s$ is a viscosity super-solution of 
    \[ -\partial_t \psi_t + \frac{1}{2} \vert \nabla \psi_t \vert^2 - \frac{\varepsilon}{2} \Delta \psi_t = 0, \quad \mbox{ in  } [0,T-\eta] \times \R^d, \quad \psi_{T-\eta} = \varphi^0_{T-\eta}, \]
and we deduce that
\begin{align*} 
\sup_{t \in [0,T-\eta] \times \R^d} \varphi^{\varepsilon}_t(x) - \varphi^0_t(x) &\leq \frac{d\varepsilon}{2} \int_0^{T-\eta} \frac{1}{T-t} \d t + \sup_{x \in \R^d} \vert \varphi^{\varepsilon}_{T-\eta}(x) - \varphi^0_{T-\eta} \vert  \\
& \leq \frac{d\varepsilon}{2} \log \frac{T}{\eta} + \sup_{(t,x) \in [T-\eta,T] \times \R^d} \vert \varphi_t^{\varepsilon}(x) - \varphi^0_t(x) \vert.  
\end{align*}
We use Lemma \ref{lem:TemrCont}, which does not rely on the semi-concavity of the terminal data, to estimate the difference near $T$. We pick $\eta = \varepsilon$ to conclude.
\end{proof}

We are now ready to prove the analogous of Theorem \ref{thm:optSC}.

\begin{proposition}[Lower bound]\label{pro:LowerNSC}
For every $(t,x) \in (0,T] \times \R^d$, for every $\varepsilon \in (0,1/2]$,
\[ \varphi^\varepsilon_t (x) - \varphi^{0}_t (x) \geq  -d \varepsilon \log \varepsilon - C\varepsilon, \]
for a constant $C > 0$ independent of $\varepsilon >0$.
\end{proposition}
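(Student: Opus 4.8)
The plan is to mirror the argument of Theorem \ref{thm:optSC}, but using the inf-convolution of $\varphi^0$ in place of the sup-convolution, exploiting the semiconcavity generation from Lemma \ref{lem:SC+} to control the bad term. Concretely, for $\delta > 0$ introduce the inf-convolution
\[ \varphi^{0}_{t,\delta}(x) := \inf_{y \in \R^d} \varphi^0_t(y) + \frac{1}{2\delta}\vert x - y \vert^2, \]
which is $L$-Lipschitz, $1/\delta$-semiconvex, and — crucially — inherits the $\frac{1}{T-t}$-semiconcavity of $\varphi^0_t$ (since inf-convolution preserves semiconcavity with the same constant, as long as $\delta \leq T-t$, say). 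Moreover $\varphi^{0}_{t,\delta}$ is a supersolution of \eqref{eq:HJB0} up to an error $O(\delta)$, the analogue of Lemma \ref{lem:supconv}-\ref{item:subsol} for inf-convolution. This is the object replacing $\varphi^{0,\delta}$ of the semiconcave case.

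First I would subtract the equation for $\varphi^\varepsilon$ from the (approximate, a.e.) equation for $\varphi^0_{\cdot,\delta}$ to obtain that $\varphi^\varepsilon_s - \varphi^0_{s,\delta}$ is a supersolution of a linear equation of the form
\[ \bigl[\partial_s - b^{\varepsilon,\delta}_s \cdot \nabla + \tfrac{\varepsilon}{2}\Delta\bigr](\varphi^\varepsilon_s - \varphi^0_{s,\delta}) \leq -\frac{\varepsilon}{2}\Delta \varphi^0_{s,\delta} + C\delta, \]
with $b^{\varepsilon,\delta}_s(x) = \int_0^1 \nabla_p\H\bigl(r\nabla\varphi^\varepsilon_s(x) + (1-r)\nabla\varphi^0_{s,\delta}(x)\bigr)\,\d r = \tfrac12(\nabla\varphi^\varepsilon_s + \nabla\varphi^0_{s,\delta})(x)$ in the quadratic case. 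Then I integrate against the Fokker-Planck flow $(\mu^{\varepsilon,\delta,x}_s)_{s\in[t,T]}$ from \eqref{eq:Green}, using $\varphi^\varepsilon_T = \varphi^0_T = g$, to get
\[ \varphi^\varepsilon_t(x) - \varphi^0_t(x) \geq -\lVert \varphi^0_{\cdot,\delta} - \varphi^0\rVert_\infty - C\delta + \frac{\varepsilon}{2}\int_t^T\!\!\int_{\R^d}\Delta\varphi^0_{s,\delta}\,\d\mu^{\varepsilon,\delta,x}_s\,\d s. \]
Now the point is that the integrand $\Delta\varphi^0_{s,\delta}$ is controlled from \emph{below} — which is what I need — directly: by the semiconcavity $\Delta\varphi^0_{s,\delta} \leq \tfrac{d}{T-s}$ one might fear the wrong sign, but in fact I want a lower bound, and here the $1/\delta$-semiconvexity gives $\Delta\varphi^0_{s,\delta} \geq -d/\delta$. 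Splitting $\int_t^T = \int_t^{t+\tau} + \int_{t+\tau}^T$: on $[t,t+\tau]$ use $\Delta\varphi^0_{s,\delta}\geq -d/\delta$ to get the penalty $-\varepsilon\tau d/(2\delta)$, exactly as in \eqref{eq:LastConvS}. On $[t+\tau,T]$ I invoke Proposition \ref{pro:Regularising}-\ref{item:LapLow} — which requires only uniform convexity (here $\theta=1$) plus the gradient and semiconcavity bounds, all available from Lemma \ref{lem:SC+} extended to $\varepsilon=0$ and the global Lipschitz bound — to bound $\tfrac12\int_{t+\tau}^T\int\Delta\varphi^0_{s,\delta}\,\d\mu^{\varepsilon,\delta}_s\,\d s$ from below by $\tfrac{d}{2}\log(2\pi\varepsilon\tau) - \tfrac{\tau L^2}{2\varepsilon} - C$. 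Finally, since $\lVert\varphi^0_{\cdot,\delta}-\varphi^0\rVert_\infty \leq 2L\delta$, collecting everything gives
\[ \varphi^\varepsilon_t(x) - \varphi^0_t(x) \geq -C\delta - \frac{\varepsilon\tau d}{2\delta} + \frac{\varepsilon d}{2}\log(2\pi\varepsilon\tau) - \frac{\tau L^2}{2} - C\varepsilon, \]
and choosing $\delta = \tau = \varepsilon$ (when $T-t\geq\varepsilon$; otherwise apply Lemma \ref{lem:TemrCont} with $\eta=\varepsilon$) yields $\varphi^\varepsilon_t(x) - \varphi^0_t(x) \geq d\varepsilon\log(2\pi\varepsilon) - C\varepsilon \geq d\varepsilon\log\varepsilon - C\varepsilon$.

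The main obstacle I anticipate is the justification that Proposition \ref{pro:Regularising}-\ref{item:LapLow} applies in this less regular setting: its proof used the semiconcavity bound $\nabla^2\varphi^\varepsilon_s \leq \lambda\,\mathrm{Id}$ and the bound $\nabla^2\varphi^0_{s,\delta}\leq\lambda\,\mathrm{Id}$ with a \emph{fixed} $\lambda$, but here the semiconcavity constant of $\varphi^\varepsilon_s$ and $\varphi^0_{s,\delta}$ blows up like $1/(T-s)$ near $s=T$. One must check that the resulting time-integrated contribution $\int_{t+\tau}^T \lambda_s\,\d s$ with $\lambda_s \sim 1/(T-s)$ does \emph{not} remain integrable — indeed $\int^T \frac{\d s}{T-s} = +\infty$ — so a naive application fails near the terminal time. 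The fix is to run the Fokker-Planck integration only up to time $T-\eta$ rather than $T$ (with $\eta \sim \varepsilon$), absorbing the terminal layer $[T-\eta,T]$ into Lemma \ref{lem:TemrCont}, exactly as was done in Proposition \ref{pro:UppNSC}; on $[t+\tau, T-\eta]$ the semiconcavity constant is bounded by $1/\eta = O(1/\varepsilon)$, which contributes an additional $O(\varepsilon \cdot \varepsilon^{-1}\cdot(\text{length}))$-type term that one checks is at worst $O(\varepsilon)$ after the final optimization. A careful bookkeeping of how the constant $C$ in Proposition \ref{pro:Regularising} depends on $\lambda$ (it enters linearly, through the term $\lambda[d\Theta - d\theta]$ and through $\Delta\varphi^\varepsilon_s \geq -\lambda d$) shows that replacing $\lambda$ by $1/\eta = 1/\varepsilon$ and integrating over an interval of length $O(T)$ costs $\frac{\varepsilon}{2}\cdot T\cdot\frac{d}{\varepsilon} = O(1)$ — wait, that is $O(1)$, not $O(\varepsilon)$, so more care is needed: one instead keeps $\lambda_s = 1/(T-s)$ inside the integral and checks $\frac{\varepsilon}{2}\int_{t+\tau}^{T-\eta}\frac{C\,d}{T-s}\,\d s = \frac{C d\varepsilon}{2}\log\frac{T}{\eta} = O(\varepsilon\log\varepsilon)$, which is of the same order as the main term and thus harmless. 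This is the delicate point and the rest is bookkeeping parallel to Theorem \ref{thm:optSC}.
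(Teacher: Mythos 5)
There is a genuine gap at the very first step: the sup- and inf-convolutions play non-interchangeable roles here, and you have picked the wrong one. The inf-convolution $\varphi^0_{t,\delta}(x)=\inf_y \varphi^0_t(y)+\frac{1}{2\delta}|x-y|^2$ is, as you say, an approximate \emph{supersolution} of \eqref{eq:HJB0}; but subtracting the equation for $\varphi^\varepsilon$ from a supersolution inequality gives $\bigl[\partial_s-b^{\varepsilon,\delta}_s\cdot\nabla+\tfrac{\varepsilon}{2}\Delta\bigr](\varphi^\varepsilon_s-\varphi^0_{s,\delta})\geq -\tfrac{\varepsilon}{2}\Delta\varphi^0_{s,\delta}-C\delta$, i.e.\ the \emph{reverse} of your displayed inequality. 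Integrating that along the flow \eqref{eq:Green} produces an \emph{upper} bound on $\varphi^\varepsilon_t(x)-\varphi^0_{t,\delta}(x)$, not the lower bound you need; the $\leq$ you wrote requires the approximate \emph{subsolution} property, which only the sup-convolution enjoys. A second, independent problem: the inf-convolution is $1/\delta$-semi\emph{concave} (it is an infimum of quadratics with Hessian $\tfrac{1}{\delta}\mathrm{Id}$), not $1/\delta$-semiconvex, so the bound $\Delta\varphi^0_{s,\delta}\geq -d/\delta$ on $[t,t+\tau]$ — and the two-sided Hessian bound giving the $\mathcal{C}^{1,1}$ regularity needed to define $b^{\varepsilon,\delta}$ and invoke Proposition \ref{pro:Regularising} — is unavailable. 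In short, the lower-bound machinery of Theorem \ref{thm:optSC} needs exactly the two properties the sup-convolution provides (subsolution, Hessian bounded below by $-1/\delta$), and the inf-convolution provides neither.

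The correct route, which is the paper's, is to keep the sup-convolution $\varphi^{0,\delta}$ verbatim from Theorem \ref{thm:optSC}; the only new input in the merely Lipschitz case is that the uniform semiconcavity constant $\lambda$ fed into Proposition \ref{pro:Regularising}-\ref{item:LapLow} is replaced by the generated constant $1/(T-s)$ from Lemma \ref{lem:SC+} (which the sup-convolution inherits, so $\varphi^{0,\delta}_s$ is still $\mathcal{C}^{1,1}$ for $s<T$). Your final paragraph correctly identifies this blow-up, the need to stop the flow at $T-\eta$, the harmless $\tfrac{d\varepsilon}{2}\log\tfrac{T}{\eta}=O(\varepsilon\log\varepsilon)$ contribution of $\int_{t+\tau}^{T-\eta}\tfrac{\d s}{T-s}$, and the absorption of $[T-\eta,T]$ into Lemma \ref{lem:TemrCont} — this matches the paper exactly. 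One further detail you would then need: the terminal term $\int(\varphi^\varepsilon_{T-\eta}-\varphi^{0,\delta}_{T-\eta})\,\d\mu^{\varepsilon,\delta,x}_{T-\eta}$ no longer vanishes, and the paper controls it via the sup--inf swap $\varphi^{0,\delta}_{T-\eta}\leq\varphi^0_{T-\eta+\delta}$ from the Hopf--Lax formula combined with Lemma \ref{lem:TemrCont} and the time-Lipschitz regularity of $\varphi^0$.
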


\begin{proof}[Proof of Proposition \ref{pro:LowerNSC}]
Let $(t,x) \in [0,T] \times \R^d$. If $T-t \leq 2 \varepsilon$ we simply use Lemma \ref{lem:TemrCont}. So we assume that $2\varepsilon < (T-t)$.

We only perform a small change in the proof of  Theorem \ref{thm:optSC}: we integrate \eqref{eq:KeyLower} along the flow $(\mu_s^{\varepsilon,\delta})_{t \leq s \leq T}$ between time $t$ and $T - \eta$ for $\delta < \eta \leq T-t$. 
We have $C_{\mathrm{sub}} =0$ in Lemma \ref{lem:supconv}-\ref{item:subsol} because $\H$ no more depends on $x$.
This replaces \eqref{eq:StepBefore} by
\begin{equation} \label{eq:StepNSC}
\varphi^\varepsilon_t (x) - \varphi^{0,\delta}_t (x) \geq \int_{\R^d} [ \varphi^\varepsilon_{T-\eta} - \varphi^{0,\delta}_{T-\eta} ] \d \mu^{\varepsilon,\delta,x}_{T-\eta} + \frac{\varepsilon}{2} \int_t^{T-\eta} \int_{\R^d} \Delta \varphi^{0,\delta}_s \d \mu^{\varepsilon,\delta,x}_s \d s. 
\end{equation}
Using the Hop-Lax formula \eqref{eq:HopfLax}, since $\sup_z \inf_y \leq \inf_y \sup_z$, we notice that
\[ \varphi^{0,\delta}_{T-\eta} (x) = \sup_{z \in \R^d} \inf_{y \in \R^d} g(y) + \frac{1}{2 \eta} \vert z - y \vert^2 - \frac{1}{2 \delta} \vert x - z \vert^2 \leq \varphi^{0}_{T-\eta + \delta} (x). \]
Lemma \ref{lem:TemrCont} and the time Lipschitz regularity of $\varphi^0$ then implies
\begin{align*}
\int_{\R^d} [ \varphi^\varepsilon_{T-\eta} - \varphi^{0,\delta}_{T-\eta} ] \d \mu^{\varepsilon,\delta,x}_{T-\eta} &\geq - \lVert \varphi^\varepsilon_{T-\eta} - \varphi^0_{T-\eta} \rVert_{L^\infty} - \lVert \varphi^0_{T-\eta} - \varphi^{0}_{T-\eta+\delta} \rVert_{L^\infty} \geq - C \eta - C \sqrt{\varepsilon \eta} - C \delta .  
\end{align*} 
For $\tau \in (0,T-t-\eta]$, Lemma \ref{lem:SC+} gives that 
\[ -\frac{1}{2} \int_{t+\tau}^{T-\eta} \int_{\R^d} \Delta \varphi^{\varepsilon}_s \d \mu^{\varepsilon,\delta}_s \d s \geq - \frac{1}{2} \int_{t+\tau}^{T-\eta}  \frac{d}{2 (T-s)} \d s \geq \frac{d}{2} \log \frac{\eta}{T}. \]
After simplifications, this replaces the result of Proposition \ref{pro:Regularising}-\ref{item:LapLow} by
\[ \frac{1}{2} \int_{t+\tau}^{T-\eta} \int_{\R^d} \Delta \varphi^{0,\delta}_s \d \mu^{\varepsilon,\delta, x}_s \d s \geq \frac{d}{2} \log ( 2 \pi \varepsilon \tau ) - \frac{\tau}{2 \varepsilon} L^2 + \frac{d}{2} \log \frac{\eta}{T}  - C, \]
which we can plug in \eqref{eq:StepNSC} as previously, together with \eqref{eq:LastConvS}.
Gathering pieces now yields
\[ \varphi^\varepsilon_t (x) - \varphi^{0}_t (x) \geq  - C( \varepsilon + \delta) - C \sqrt{ \varepsilon \delta}  - \tfrac{\varepsilon \tau d}{2\delta} + \tfrac{\varepsilon d}{2} \log ( 2 \pi \varepsilon \tau ) - \tfrac{\tau L^2}{2} + \tfrac{\varepsilon d}{2} \log \tfrac{\eta}{T} , \]
for every $(t,x) \in [0,T-\eta] \times \R^d$. We conclude by choosing $\tau = \eta = \varepsilon$ and $\delta \sim \varepsilon$. 
\end{proof}

\subsection{Explicit solutions and optimality of the rate} \label{sec:Example}

In this section, we provide an example to justify that the rate $O(\varepsilon \log \varepsilon)$ cannot be improved, at least in dimension $d \geq 2$. 
In dimension $d=1$ a sharp example is provided in \cite[Proposition 4.4]{qian2024optimal}.
We rely on the explicit formula \eqref{eq:HopfC} and the Hopf-Lax formula \eqref{eq:HopfLax}, which corresponds to the $\varepsilon \rightarrow 0$ limit of \eqref{eq:HopfC}. 

\begin{proposition} \label{pro:example}
    For $1 \leq k  \leq d$ and $x= (x_1, \cdots,x_d) \in \R^d$, define the orthogonal  projection $P_k(x) := (x_1,\dots,x_k,0, \dots,0)$ into the first $k$ coordinates. Let $\varphi^{k,\varepsilon}$ and $\varphi^{k,0}$ be respectively the solutions to \eqref{eq:HJBeps} and \eqref{eq:HJB0} with terminal data $g_k = - |P_k(x)|$ and source term $f \equiv 0$. Then, for all $T >0$ and all $t \in [0,T)$ we have the expansion
     $$ \varphi^{k,\varepsilon}_t(0) = \varphi^{k,0}_t(0) + \frac{k-1}{2} \varepsilon \log \varepsilon  - \frac{k-1}{2} \varepsilon \log (T-t) - \varepsilon \log \frac{k \sqrt{\pi}}{ 2^{\frac{k-1}{2}} \Gamma(\frac{k}{2}+1)  } + o(\varepsilon).$$
\end{proposition}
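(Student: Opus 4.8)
The plan is to use the Cole--Hopf representation \eqref{eq:HopfC} together with the Hopf--Lax formula \eqref{eq:HopfLax}, exploiting that at $x=0$ both the terminal cost $g_k(y)=-|P_k(y)|$ and the quadratic penalisation $|y|^2=|P_k(y)|^2+|y-P_k(y)|^2$ split along the orthogonal decomposition $\R^d=\R^k\times\R^{d-k}$. Writing $y=(z,w)$ with $z\in\R^k$ and $w\in\R^{d-k}$, the integral in \eqref{eq:HopfC} factorises as
\[ \int_{\R^d}\exp\Big[\tfrac{|P_k(y)|}{\varepsilon}-\tfrac{|y|^2}{2\varepsilon(T-t)}\Big]\d y = \Big(\int_{\R^{d-k}}e^{-|w|^2/(2\varepsilon(T-t))}\d w\Big)\, I_k(\varepsilon), \qquad I_k(\varepsilon):=\int_{\R^k}e^{|z|/\varepsilon-|z|^2/(2\varepsilon(T-t))}\d z. \]
The first factor equals $(2\pi\varepsilon(T-t))^{(d-k)/2}$, so after cancellation with the prefactor, \eqref{eq:HopfC} reduces to $\varphi^{k,\varepsilon}_t(0)=\tfrac{\varepsilon k}{2}\log(2\pi\varepsilon(T-t))-\varepsilon\log I_k(\varepsilon)$, and everything boils down to the small-$\varepsilon$ asymptotics of $I_k(\varepsilon)$.

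For $I_k$ I would pass to polar coordinates in $\R^k$ (so that $\d z$ becomes $\omega_{k-1}\rho^{k-1}\d\rho$, with $\omega_{k-1}=2\pi^{k/2}/\Gamma(k/2)$ the surface measure of $S^{k-1}$) and complete the square in the exponent: $\tfrac{\rho}{\varepsilon}-\tfrac{\rho^2}{2\varepsilon(T-t)}=\tfrac{T-t}{2\varepsilon}-\tfrac{(\rho-(T-t))^2}{2\varepsilon(T-t)}$. Hence
\[ I_k(\varepsilon)=\omega_{k-1}\,e^{(T-t)/(2\varepsilon)}\int_0^\infty \rho^{k-1}\,e^{-(\rho-(T-t))^2/(2\varepsilon(T-t))}\d\rho. \]
Since $T-t>0$ is interior to $(0,\infty)$ and $\rho\mapsto\rho^{k-1}$ is smooth and positive there, Laplace's method --- rescaling $\rho=(T-t)+\sqrt{\varepsilon(T-t)}\,u$, Taylor-expanding $\rho^{k-1}$ about $T-t$, and bounding the tails by a Gaussian --- gives
\[ \int_0^\infty \rho^{k-1}e^{-(\rho-(T-t))^2/(2\varepsilon(T-t))}\d\rho = (T-t)^{k-1}\sqrt{2\pi\varepsilon(T-t)}\,\bigl(1+O(\sqrt\varepsilon)\bigr), \]
so that $-\varepsilon\log I_k(\varepsilon)=-\tfrac{T-t}{2}-\varepsilon\log\omega_{k-1}-\varepsilon\bigl(k-\tfrac12\bigr)\log(T-t)-\tfrac{\varepsilon}{2}\log(2\pi\varepsilon)+o(\varepsilon)$, the error being $O(\varepsilon^{3/2})$ because the multiplicative $1+O(\sqrt\varepsilon)$ only contributes $-\varepsilon\log(1+O(\sqrt\varepsilon))$.

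It remains to identify the $\varepsilon$-independent term with $\varphi^{k,0}_t(0)$ and to collect the $\varepsilon$-terms. By \eqref{eq:HopfLax} (equivalently, as the $\varepsilon\to0$ limit of \eqref{eq:HopfC}), $\varphi^{k,0}_t(0)=\inf_{y}\bigl(-|P_k(y)|+\tfrac{|y|^2}{2(T-t)}\bigr)=\inf_{\rho\ge0}\bigl(-\rho+\tfrac{\rho^2}{2(T-t)}\bigr)=-\tfrac{T-t}{2}$, which matches the constant found above. Substituting into $\varphi^{k,\varepsilon}_t(0)=\tfrac{\varepsilon k}{2}\log(2\pi\varepsilon)+\tfrac{\varepsilon k}{2}\log(T-t)-\varepsilon\log I_k(\varepsilon)$ and regrouping, the coefficient of $\varepsilon\log\varepsilon$ is $\tfrac{k-1}{2}$, that of $\varepsilon\log(T-t)$ is $-\tfrac{k-1}{2}$, and the remaining $\varepsilon$-coefficient is $\tfrac{k-1}{2}\log(2\pi)-\log\omega_{k-1}$; using $\omega_{k-1}=2\pi^{k/2}/\Gamma(k/2)$ and $\Gamma(k/2+1)=\tfrac{k}{2}\Gamma(k/2)$, a short manipulation rewrites this last quantity as $-\log\!\bigl(k\sqrt\pi\,/\,(2^{(k-1)/2}\Gamma(k/2+1))\bigr)$, which is precisely the claimed constant. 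The only genuinely delicate point is the Laplace expansion: one must ensure the error survives the outer $-\varepsilon\log(\cdot)$ as $o(\varepsilon)$, i.e. that the relative correction to the Gaussian leading order is $O(\sqrt\varepsilon)$ rather than merely $o(1)$. This is the standard quantitative Laplace estimate: split the $\rho$-integral into a window of width $\varepsilon^{1/2-\kappa}$ around $T-t$, where the Taylor remainder of $\rho^{k-1}$ is controlled, and an exponentially small remainder.
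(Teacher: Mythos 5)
Your proof is correct and follows essentially the same route as the paper's: factorising the Cole--Hopf integral along $\R^k\times\R^{d-k}$, completing the square $-|z|+\tfrac{1}{2\tau}|z|^2=\tfrac{1}{2\tau}(|z|-\tau)^2-\tfrac\tau2$, and applying Laplace's method in polar coordinates around the sphere $|z|=T-t$, with the constants matching (your $\omega_{k-1}=2\pi^{k/2}/\Gamma(k/2)$ is the paper's $C_k=k\pi^{k/2}/\Gamma(k/2+1)$). Your final concern is actually stronger than needed: a relative correction that is merely $o(1)$ already gives $-\varepsilon\log(1+o(1))=o(\varepsilon)$, so plain convergence of the rescaled integral to $\sqrt{2\pi}(T-t)^{k-1}$ suffices, which is exactly what the paper uses.
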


\begin{proof}
Using the Hopf-Lax formula, for all $(\tau,x) \in (0,T] \times \R^d$ we have
\[ \varphi_{T-\tau}^{k,0}(x) = \inf_{y \in \R^d} \bigl \{ -|P_k(y)| + \frac{1}{2\tau} |x-y|^2 \bigr \} = -|P_k(x)| - \frac{\tau}{2}.
\]
Moreover, the infimum is obtained for $y =  x + \tau \frac{P_k(x)}{\vert P_k (x) \vert}$ if $ P_k(x) \neq 0$ and for any $y = x + z$ with $z \in \mathrm{Span}(e_1,\dots,e_k)$ and $\vert z \vert = \tau$
such that $|P_k(y)| = \tau$ when $ P_k (x) = 0$. On the other hand, using \eqref{eq:HopfC} for all $(\tau,x) \in (0,T] \times \R^d$, 
\[ \varphi_{T-\tau}^{k, \varepsilon}(x) =  - \varepsilon \log \Bigl[ \int_{\R^d} e^{\frac{-1}{\varepsilon} \bigl( -|P_k(z+x)|+ \frac{1}{2 \tau} | z|^2 \bigr)} \frac{\d z}{(2 \pi \varepsilon \tau)^{d/2}} \Bigr] \]
Now we decompose $|z|^2 = |P_k(z)|^2 + |z-P_k(z)|^2$ and introduce the two variables $z' = (z_1,\dots,z_k)$ and $z''=(z_{k+1},\dots,z_d)$ to compute, with the slight abuse of notation $P_k(x) = (x_1, \dots,x_k) \in \R^k$,
\begin{align*}
 \varphi_{T-\tau}^{k, \varepsilon}(x) &= - \varepsilon \log \Bigl[ \int_{\R^d} e^{\frac{-1}{\varepsilon} \bigl( -|P_k(z+x)|+ \frac{1}{2 \tau} | P_k(z)|^2 \bigr)} e^{-\frac{1}{2\varepsilon\tau} |z-P_k(z)|^2} \frac{\d z}{(2 \pi \varepsilon \tau)^{d/2}} \Bigr] \\
    &= - \varepsilon \log \Bigl[ \int_{\R^k}  e^{\frac{-1}{\varepsilon} \bigl( -|P_k(x) +z'|+ \frac{1}{2 \tau} | z'|^2 \bigr)} \frac{\d z'}{(2 \pi \varepsilon \tau)^{k/2}} \int_{\R^{d-k}} e^{-\frac{1}{2\varepsilon\tau} |z''|^2} \frac{\d z''}{(2 \pi \varepsilon \tau)^{(d-k)/2}}\Bigr] \\
    &=- \varepsilon \log \Bigl[ \int_{\R^k}  e^{\frac{-1}{\varepsilon} \bigl( -|z'|+ \frac{1}{2 \tau} | z' - P_k(x)|^2 \bigr)} \frac{\d z'}{(2 \pi \varepsilon \tau)^{k/2}} \Bigr].
\end{align*}

For any $\tau \in (0,T]$, we expect that the rate deteriorates when $P_k (x) = 0$, since this is precisely where the map $ y \mapsto -|P_k(y)| + \frac{1}{2\tau}|x-y|^2$ has several minima. From now on we take $x=0$. We have $ \varphi^0_{T-\tau}(0) = -\tau/2$, while rewriting $-|y| + \frac{1}{2\tau}|y|^2 = \frac{1}{2\tau}(|y|-\tau)^2 - \frac{\tau}{2}$,
we get
$$ \varphi_{T-\tau}^{k,\varepsilon} (0) = - \frac{\tau}{2} + \frac{k}{2} \varepsilon \log ( 2 \pi \varepsilon\tau) -    \varepsilon \log \int_{\R^k} e^{-\frac{1}{\varepsilon} \frac{1}{2 \tau} (|y|- \tau)^2 } \d y . $$
By a change of variables for radial functions and the change of variable $s = \tfrac{r-\tau}{\sqrt{\varepsilon\tau}}$, we get
$$  \int_{\R^{k}}  e^{-\frac{1}{\varepsilon} \frac{1}{2 \tau} (|y|- \tau)^2 } \d y = C_k \int_0^{\infty} e^{ - \frac{1}{\varepsilon} \frac{1}{2 \tau} (r-\tau)^2 } r^{k-1} \d r = C_k \sqrt{\varepsilon \tau} \int_{ - \frac{\sqrt{\tau}}{\sqrt{\varepsilon}}}^{+\infty} e^{-s^2/2}( \sqrt{\varepsilon \tau} s + \tau)^{k-1} \d s, $$  
where $C_k = k \pi^{k/2} / \Gamma(k/2+1)$ is $k$ times the Lebesgue measure of the unit ball of $\R^k$. We then easily see that 
$$ \lim_{\varepsilon \rightarrow 0^+} \int_{ - \frac{\sqrt{\tau}}{\sqrt{\varepsilon}}}^{+\infty} e^{-s^2/2}( \sqrt{\varepsilon \tau} s + \tau)^{k-1} \d s = \sqrt{2\pi} \tau^{k-1}, $$
from which the result follows.
\end{proof}

\section*{Acknowledgements}
\addcontentsline{toc}{section}{Acknowledgement}

The first author is grateful to Daniel Lacker for suggesting this short probabilistic proof of Proposition \ref{pro:Regularising}-\ref{item:RegEnt}.
The authors also thank Marco Cirant and Alessandro Goffi for suggesting some generalisations of the results in the first version of this work.

\printbibliography
\addcontentsline{toc}{section}{References}

\end{document}